%%%%%%%%%%%%%%%%%%%%%%%%%%%%%%%%%%%%%%%%%%%%%%%%%%%%%%%%%%%%%%%%%%%%%%%%%%%%%%%%%%%%%%%%%%%%%%%%%%%
%%      This is a LaTeX file of the paper
%%
%%      Title: A general nonuniqueness result for the Yamabe-type problems for conformally variational Riemannian invariants
%%
%%      Jo\~{a}o Henrique Andrade
%%      Department of Mathematics
%%     	University of S\~ao Paulo
%%      05508-090, S\~ao Paulo-SP, Brazil
%%      andjhenrique@mat.ufpb.br    
%%
%%      Jeffrey Steven Case
%%      Department of Mathematics
%%     	Penn State University
%%      16802, Philadelphia-PA, USA
%%      jscase@ime.usp.br
%%
%%      Paolo Piccione
%%      Department of Mathematics
%%     	University of S\~ao Paulo
%%      05508-090, S\~ao Paulo-SP, Brazil
%%      piccione@ime.usp.br
%%
%%      Juncheng Wei
%%      Department of Mathematics
%%      University of British Columbia
%%      V6T 1Z2, Vancouver-BC, Canada
%%      jcwei@math.ubc.ca 
%%
%%      Version - July 2023
%%      Submit to: 
%%      Editor: 
%%
%%%%%%%%%%%%%%%%%%%%%%%%%%%%%%%%%%%%%%%%%%%%%%%%%%%%%%%%%%%%%%%%%%%%%%%%%%%%%%%%%%%%%%%%%%%%%%%%%%
\documentclass[reqno,11pt]{amsart}
\usepackage{amsmath,amssymb,latexsym,soul,cite,mathrsfs,accents}
%\usepackag[e{stmaryrd}
\usepackage{color,enumitem,graphicx}
\usepackage[colorlinks=true,urlcolor=blue,
citecolor=red,linkcolor=blue,linktocpage,pdfpagelabels,
bookmarksnumbered,bookmarksopen]{hyperref}
\usepackage[english]{babel}
\usepackage[left=2.6cm,right=2.6cm,top=2.9cm,bottom=2.9cm]{geometry}
\usepackage{tensor}
\usepackage[abbrev]{amsrefs}
\pretolerance=10000
%\numberwithin{equation}
%\onehalfspacing 
\setenumerate{label=(\roman*)}

\DeclareMathOperator{\Aut}{Aut}

\DeclareMathOperator{\Id}{Id}

\DeclareMathOperator{\Vol}{Vol}
\DeclareMathOperator{\dvol}{dV}

\DeclareMathOperator{\Ric}{Ric}

\DeclareMathOperator{\Conf}{Conf}

\newcommand{\defn}[1]{{\boldmath\bfseries#1}}

\newcommand{\grd}{g_{\mathrm{rd}}}
\newcommand{\ghyp}{g_{\mathrm{hyp}}}

\newcommand{\Diff}{\mathrm{Diff}}

\newcommand{\cg}{\widetilde{g}}

\newcommand{\cu}{\widetilde{u}}

\newcommand{\cM}{\widetilde{M}}

\newcommand{\cX}{\widetilde{X}}

\newcommand{\cpi}{\widetilde{\pi}}

\newcommand{\csigma}{\widetilde{\sigma}}

\newcommand{\hg}{\widehat{g}}

\newcommand{\hG}{\widehat{G}}

\newcommand{\lv}{\lvert}
\newcommand{\rv}{\rvert}

%\newcommand{\semiplus}{+}

% Various math letters I use a lot
\newcommand{\mA}{\mathcal{A}}

\newcommand{\mF}{\mathcal{F}}

\newcommand{\mI}{\mathcal{I}}

\newcommand{\mM}{\mathcal{M}}

\newcommand{\mS}{\mathcal{S}}

\newcommand{\kD}{\mathfrak{D}}

\newcommand{\bN}{\mathbb{N}}

\newcommand{\bR}{\mathbb{R}}

\newcommand{\Sch}{\mathsf{P}}

% long hook arrow

% This puts comments in the right hand margin
\def\sideremark#1{\ifvmode\leavevmode\fi\vadjust{\vbox to0pt{\vss
 \hbox to 0pt{\hskip\hsize\hskip1em
 \vbox{\hsize3cm\tiny\raggedright\pretolerance10000
 \noindent #1\hfill}\hss}\vbox to8pt{\vfil}\vss}}}

% Uniformizing our  ``such that'' in set notation
\newcommand{\suchthat}{\mathrel{}:\mathrel{}}

\newtheorem{theorem}{Theorem}[section]
\newtheorem{proposition}[theorem]{Proposition}
\newtheorem{lemma}[theorem]{Lemma}

\theoremstyle{definition}
\newtheorem{definition}[theorem]{Definition}

\newcommand{\innerthmname}{}% initialize

\theoremstyle{definition}

\makeatletter
\def\namedlabel#1#2{\begingroup
	#2%
	\def\@currentlabel{#2}%
	\phantomsection\label{#1}\endgroup
}

\def\XXint#1#2#3{{\setbox0=\hbox{$#1{#2#3}{\int}$ }
		\vcenter{\hbox{$#2#3$ }}\kern-.6\wd0}}

\newcommand*\owedge{\mathpalette\@owedge\relax}
\newcommand*\@owedge[1]{%
	\mathbin{%
		\ooalign{%
			$#1\m@th\bigcirc$\cr
			\hidewidth$#1\m@th\wedge$\hidewidth\cr
		}%
	}%
}
\makeatother

\theoremstyle{remark}
\newtheorem{remark}[theorem]{Remark}

\numberwithin{equation}{section}

%%%%%%%%%%%%%%%%%%%%%%%%%%%%%%%%%%%%%%%%%%%%%%%%%%%%%%%%%%%%%%%%%%%%%%%%%%%%%%%%%%%%%%%%%%%%%%%%%%
% TITLE AND AUTHORS
%%%%%%%%%%%%%%%%%%%%%%%%%%%%%%%%%%%%%%%%%%%%%%%%%%%%%%%%%%%%%%%%%%%%%%%%%%%%%%%%%%%%%%%%%%%%%%%%%%
\title[A general nonuniqueness result for conformally variational invariants]{A general nonuniqueness result for Yamabe-type problems for conformally variational Riemannian invariants}  

\author[J.H. Andrade]{Jo\~{a}o H. Andrade}
\author[J.S. Case]{Jeffrey S. Case}
\author[P. Piccione]{Paolo Piccione}
\author[J. Wei]{Juncheng Wei}

\address[J.H. Andrade]{
	% Department of Mathematics,
	% University of British Columbia
	% \newline\indent 
	% Vancouver, BC V6T 1Z2, Canada
	% \newline\indent
	% and
	% \newline\indent
	% (Permanent address) Institute of Mathematics and Statistics,
	Institute of Mathematics and Statistics,
	University of S\~ao Paulo
	\newline\indent 
	S\~ao Paulo, SP 05508-090, Brazil}
% \email{\href{mailto:andradejh@math.ubc.ca}{andradejh@math.ubc.ca}}
\email{\href{mailto:andradejh@ime.usp.br}{andradejh@ime.usp.br}}

\address[J.S. Case]{
	Department of Mathematics \\
	The Pennsylvania State University
    \newline\indent
    University Park, PA 16802, USA}
\email{\href{mailto:jscase@psu.edu}{jscase@psu.edu}}

\address[P. Piccione]{
    Department of Mathematics, 
    School of Sciences, Great Bay University
    \newline\indent 
    523000, Dongguan-GD, People’s Republic of China
    \newline\indent 
    and
    \newline\indent
    School of Mathematical Sciences, Zhejiang Normal University
    \newline\indent 
    321004, Jinhua-ZJ, People’s Republic of China
    \newline\indent 
    and
    \newline\indent
    (Permanent address) Institute of Mathematics and Statistics,	University of S\~ao Paulo
    \newline\indent 
    05508-090, S\~ao Paulo-SP, Brazil}
    \email{\href{mailto:piccione@ime.usp.br}{paolo.piccione@usp.br}}

\address[J. Wei]{
	% Department of Mathematics,
	% University of British Columbia
	% \newline\indent 
	% V6T 1Z2, Vancouver-BC, Canada
	% \newline\indent
	% and
	% \newline\indent 
	Department of Mathematics,
	The Chinese University of Hong Kong
	\newline\indent 
	Shatin-NT, Hong Kong}
% \email{\href{mailto:jcwei@math.ubc.ca}{jcwei@math.ubc.ca}}
\email{\href{mailto:wei@math.cuhk.edu.hk}{wei@math.cuhk.edu.hk}}

%\thanks{*Corresponding author.}
\subjclass[2020]{Primary 58J55; Secondary 35J60, 35B09, 35J30, 53C18}
\keywords{Conformally invariant equations, prescribed curvature, conformally variational invariants}
%\date{\rightarrowday}

\begin{document}
	
	\begin{abstract}
		Given a conformally variational scalar Riemannian invariant $I$, we identify a sufficient condition for a compact Riemannian manifold to admit finite regular coverings with many nonhomothetic conformal rescalings with $I$ constant.
        We also identify a sufficient condition for the universal cover to admit infinitely many nonhomothetic periodic conformal rescalings with $I$ constant.
        Using these conditions, we improve known nonuniqueness results for the $Q$-curvatures of orders two, four, and six.
        We also prove nonuniqueness results for higher-order $Q$-curvatures and renormalized volume coefficients.
	\end{abstract}
	
	\maketitle

	\numberwithin{equation}{section} 
	\numberwithin{theorem}{section}
		
	\section{Introduction}\label{sec:intro}
 
The affirmative solution to the Yamabe Problem~\cite{MR888880} asserts that if $(M^n,g_0)$, $n \geq 3$, is a compact Riemannian manifold, then there is a unit volume conformal rescaling $g \in [g_0]$ such that
\begin{equation*}
 Q_2^g = Y_{Q_2}(M^n,[g_0]) := \inf_{\hg \in [g_0]} \left\{ \int_M Q_2^{\hg} \dvol_{\hg} \suchthat \Vol_{\hg}(M) = 1 \right\}
\end{equation*}
is constant, where $Q_2 := R/2(n-1)$ is proportional to the scalar curvature.
Bettiol and Piccione~\cite{MR3803113} used this fact to produce numerous examples of Riemannian manifolds that admit many nonhomothetic conformal rescalings with constant scalar curvature.
More precisely, if $(M^n,g)$, $n \geq 3$, is \emph{any} compact Riemannian manifold with positive Yamabe constant whose fundamental group has infinite profinite completion---for example, appropriately chosen Riemannian products with a compact symmetric space of Euclidean or negative type---then for each $\ell \in \bN$ there is a finite regular covering $\pi \colon \cM^n \to M^n$ for which the conformal manifold $(\cM^n,[\pi^\ast g])$ admits pairwise nonhomothetic metrics $\{ g_j \}_{j=1}^\ell \subset [\pi^\ast g]$ of constant scalar curvature.
This gives a far-reaching generalization of Schoen's observation~\cite{MR994021} that the number of conformal rescalings of the product metric on $S^1(L) \times S^{n-1}$ with constant scalar curvature tends to $\infty$ as $L \to \infty$.

Branson~\cite{MR1316845} introduced the $Q$-curvature $Q_{2k}$ of order $2k$ as a higher-order generalization of the scalar curvature.
Bettiol and Piccione's nonuniqueness result has been generalized to these when $k \leq 3$.
To run their argument, one needs a condition that is stable with respect to finite covers and guarantees the existence of a smooth minimizer of the Yamabe-type constant
\begin{equation*}
 Y_{Q_{2k}}^+(M^n,[g_0]) := \inf_{\hg \in [g_0]} \left\{ \int_M Q_{2k}^{\hg} \dvol_{\hg} \suchthat \Vol_{\hg}(M) = 1 \right\} .
\end{equation*}
Conditions of this type are known for orders at most six~\cites{MR3518237,MR888880,MR3420504,MR3509928,MR3652455} or under restrictive assumptions on the conformal manifold~\cites{MR2219215,case-malchiodi}.
A perturbation argument yields nonuniqueness results~\cite{arXiv:2302.11073} for the fractional $Q$-curvature~\cite{GonzalezQing2013} $Q_{2\gamma}$ when $\gamma$ is sufficiently close to $1/2$ or $1$.

By lifting the above metrics to the universal cover, one obtains noncompact manifolds with infinitely many periodic metrics of constant $Q_{2k}$-curvature.
Since the fundamental group of a compact hyperbolic manifold, and hence any product thereof, has infinite profinite completion~\cite{MR1299730}, this argument produces infinitely many solutions to the singular $Q_{2k}$-Yamabe Problem on $S^n \setminus S^\ell$ under appropriate hypotheses on $k$, $\ell$, and $n$ (cf.\ \cite{MR3803113}*{Corollary~1.2}).
However, since the isometry group of a compact manifold $(M^n,g)$ need not be isomorphic to the isometry group of its universal cover---for example, compact hyperbolic manifolds have finite isometry group, but the isometry group of the universal cover is noncompact---it is unclear how many of the lifted metrics are nonhomothetic.
In the companion paper~\cite{ACPW}, we use the Ferrand--Obata Theorem~\cites{MR1371767,MR0303464,MR1334876} to show that the lifted metrics are generically nonhomothetic in the case of the scalar curvature.

The purpose of this article is two-fold.
First, we develop an axiomatic framework which adapts the Bettiol--Piccione argument to variational scalar invariants.
This framework exposes the essential ingredients of their argument and is powerful enough to deduce that the periodic metrics in the universal cover are generically nonhomothetic.
Second, we demonstrate the utility of our axiomatic framework by indicating how it recovers known nonuniqueness results and using it to prove three new nonuniqueness results.
We begin with a discussion of the latter.

Our first two applications are to the nonuniqueness of metrics with constant $Q$-curvature on certain product manifolds.

\begin{theorem}
 \label{q-corollary}
 Fix $k,m \in \bN$.
 There is an $N = N(k,m) \in \bN$ such that if
 \begin{enumerate}
  \item[{\rm (i)}] $(M_1^m,g_1)$ is a compact Riemannian manifold for which $\Ric_{g_1} = -(m-1)g_1$ and $\pi_1(M_1)$ has infinite profinite completion, and
  \item[{\rm (ii)}] $(M_2^{n-m},g_2)$ is a compact Riemannian manifold for which $\Ric_{g_2} = (n-m-1)g_2$ and $n \geq N$,
 \end{enumerate}
 then for each $\ell \in \bN$, there is a finite regular covering $\pi \colon \cX^n \to M_1^m \times M_2^{n-m}$ for which there exist pairwise nonhomothetic metrics $\{ \sigma_j \}_{j=1}^\ell \subset [\pi^\ast (g_1 \oplus g_2)]$ of constant $Q_{2k}$-curvature.
 Moreover, the conformal universal cover $( \cM_1^m \times \cM_2^{n-m}, [\cpi^\ast(g_1 \oplus g_2)])$ admits infinitely many pairwise nonhomothetic periodic representatives $\{\csigma_j\}_{j=1}^\infty$ of constant $Q_{2k}$-curvature.
\end{theorem}

In general, the constant $N$ is hard to estimate~\cite{case-malchiodi}.
However, $N$ is known when restricting to locally conformally flat manifolds~\cite{MR2219215}:

\begin{theorem}
 \label{lcf-q-corollary}
 Fix $k,m \in \bN$.
 Let $n \geq 2k + 2m - 1$ be a positive integer.
 For every compact hyperbolic manifold $(M^m,\ghyp)$ and every $\ell \in \bN$, there is a finite regular covering $\pi \colon \cM^m \to M^m$ such that $(\cM^m \times S^{n-m}, [\pi^\ast\ghyp \oplus \grd])$ admits pairwise nonhomothetic representatives $\{ \sigma_j \}_{j=1}^\ell$ of constant $Q_{2k}$-curvature.
 Moreover, $(H^m \times S^{n-m}, [\ghyp \oplus \grd] )$ admits infinitely many pairwise nonhomothetic periodic conformal representatives of constant $Q_{2k}$-curvature.
\end{theorem}

Since $(H^m \times S^{n-m}, [\ghyp \oplus \grd])$ is conformally equivalent to $(S^n \setminus S^{m-1}, [\grd])$~\cite{MR1139641}, Theorem~\ref{lcf-q-corollary} gives a nonuniqueness result for singular (complete) metrics with constant $Q$-curvature.
In this form, Theorem~\ref{lcf-q-corollary} improves known results~\cites{MR994021,arXiv:2306.00679,MR4251294,MR3803113} when $k \leq 3$, recovers a result~\cite{ACPW} in our companion paper when $k=1$, and is entirely new for larger values of $k$.

Our third application is to the nonuniqueness of metrics with constant $\sigma_k$-curvature on particular locally conformally flat product manifolds.

\begin{theorem}
 \label{vk-corollary}
 Fix $k,m \in \bN$.
 There is an $N = N(k,m) \in \bN$ such that if $n \geq N$, then for every compact hyperbolic manifold $(M^m,\ghyp)$ and every $\ell \in \bN$, there is a finite regular covering $\pi \colon \cM^m \to M^m$ such that $(\cM^m \times S^{n-m}, [\pi^\ast \ghyp \oplus \grd])$ admits pairwise nonhomothetic representatives $\{ \sigma_j \}_{j=1}^\ell$ of constant $v_k$-curvature.
 Moreover, $(H^m \times S^{n-m}, [\ghyp \oplus \grd] )$ admits infinitely many pairwise nonhomothetic periodic representatives of constant $v_k$-curvature.
\end{theorem}

Our convention is that $S^1$ is a compact one-dimensional hyperbolic manifold, so Theorem~\ref{vk-corollary} significantly generalizes a nonuniqueness result of Viaclovsky~\cite{MR1738176}.
See Remark~\ref{rk:sigmak-N-values} for a discussion of the values $N(k,m)$.

The axiomatic framework underlying the above results is succinctly formulated in terms of conformally variational invariants, or CVIs~\cites{MR3955546,MR4392224}.
Roughly speaking, a CVI
of weight $-2k$ is a natural scalar Riemannian invariant $I$ such that
\begin{equation*}
 \left. \frac{\mathrm{d}}{\mathrm{d}t}\right|_{t=0} \int_M I^{e^{2tu}g} \dvol_{e^{2tu}g} = (n-2k)\int_M uI^g \dvol_g
\end{equation*}
for all compact Riemannian manifolds $(M^n,g)$, $n > 2k$, and all $u \in C^\infty(M)$;
see Section~\ref{sec:geometry} for a precise definition.
Examples include the scalar curvature, the $Q$-curvatures~\cite{MR1316845}, and the renormalized volume coefficients~\cites{MR2493186,MR1758076};
the latter specialize to the $\sigma_k$-curvatures when $k\leq2$ or the underlying manifold is locally conformally flat~\cite{MR2351380}.
The vector space of CVIs of weight $-2k$ is classified~\cite{MR3955546} when $k \leq 3$.

Given a CVI $I$ of weight $-2k$, there is is a minimal positive integer $r \leq 2k$, called the \defn{rank}, such that there is a formally self-adjoint, conformally covariant, polydifferential operator $D \colon \bigl( C^\infty(M) \bigr)^{\otimes(r-1)} \to C^\infty(M)$ for which
\begin{equation}
 \label{eqn:I-to-D}
 D^g(1^{\otimes (r-1)}) = \left( \frac{n-2k}{r} \right)^{r-1}I^g
\end{equation}
on any Riemannian manifold $(M^n,g)$, $n > 2k$~\cite{MR4392224}; see Section~\ref{sec:geometry} for details.
For example, the $Q$-curvatures have rank $2$, and the renormalized volume coefficient $v_k$ has maximal rank $2k$.
Equation~\eqref{eqn:I-to-D} implies that if $(M^n,g)$ is compact and $u \in C^\infty(M)$ is positive, then
\begin{equation}
 \label{eqn:total-I-to-D}
 \left( \frac{n-2k}{r} \right)^{r-1}\int_M I^{g_u} \dvol_{g_u} = \int_M uD^g(u^{\otimes(r-1)}) \dvol_g ,
\end{equation}
where $g_u := u^{\frac{2r}{n-2k}}g$.

Equation~\eqref{eqn:total-I-to-D} allows one to define a Yamabe-type constant associated with a CVI $I$ either in terms of the total $I$-curvature or the Dirichlet energy of the associated operator $D$.
However, for CVIs of rank at least three, the operator $u \mapsto D(u^{\otimes(r-1)})$ is nonlinear. Hence, one typically must constrain the allowable representatives of the conformal class to obtain a finite Yamabe-type constant and guarantee that the PDE is elliptic at its minimizers.
A \defn{geometric cone} of weight $-2k$ and rank $r$ is a function $U$ which assigns to each compact Riemannian manifold $(M^n,g)$, $n > 2k$, an open subset $U^g \subset C^\infty(M)$ such that
\begin{enumerate}
 \item if $u \in U$, then $cu \in U$ for each constant $c>0$;
 \item given $w \in C^\infty(M;\bR_+)$, it holds that $wU^g = U^{g_w}$ for $g_w := w^{\frac{2r}{n-2k}}g$; and
 \item if $\pi \colon \cM^n \to M^n$ is a finite connected covering, then $\pi^\ast(U^g) \subseteq U^{\pi^\ast g}$.
\end{enumerate}
Different choices of geometric cone $U$ may be relevant for the same CVI.
For example, when studying the fourth-order $Q$-curvature, one can consider the cones of all functions, of all positive functions, and of all positive functions which are conformal factors for a metric of positive scalar curvature~\cite{MR3509928}.

The \defn{$(I,U)$-Yamabe constant} is
\begin{equation}\label{eqn:yamabe}
 Y_{(I,U)}(M^n,[g]) := \left( \frac{r}{n-2k} \right)^{r-1} \inf_{u \in U^g} \left\{ \int_M uD^g(u^{\otimes(r-1)}) \dvol_g \suchthat \int_M \lv u \rv^{\frac{rn}{n-2k}} \dvol_g = 1 \right\} .
\end{equation}
When $U=C^\infty(M)$, we call this the \defn{$I$-Yamabe constant}, denoted $Y_I(M^n,[g])$.
The Yamabe constant~\cite{MR888880} is proportional to the $Q_2$-Yamabe constant.
The \defn{metric $(I,U)$-Yamabe constant} is
\begin{equation}\label{eqn:geometric-yamabe}
  Y_{(I,U)}^+(M^n,[g]) := \inf_{g_u \in [g]_U} \left\{ \int_M I^{g_u} \dvol_{g_u} \suchthat \Vol_{g_u}(M) = 1 \right\}.
\end{equation}
When $U=C^\infty(M)$, we call this the \defn{metric $I$-Yamabe constant}, denoted $Y_I^+(M^n,[g])$.
Here
\begin{equation}\label{eq:conformalclass}
  [g]_U:= \left\{ u^{\frac{2r}{n-2k}}g \suchthat u \in U^g \cap C^\infty(M;\bR_+) \right\}
\end{equation}
is the set of all Riemannian metrics conformal to $g$ with a conformal factor in $U$.
Note that
\begin{equation*}
    Y_{(I,U)}(M^n,[g]) \leq Y_{(I,U)}^+(M^n,[g]),
\end{equation*}
and if $U \subseteq C^\infty(M;\bR_+)$, then equality holds.
Additionally, if there is a positive minimizer of $Y_{(I,U)}(M^n,[g])$, then equality holds.

There is no general result that guarantees the existence of a minimizer for the $(I,U)$-Yamabe constant.
Instead, we isolate certain essential features of a ``good'' existence result.

\begin{definition}
    \label{defn:geometric-aubin-set}
    Let $I$ be a CVI of weight $-2k$ and rank $r$, and let $U$ be a geometric cone.
    A \defn{geometric Aubin set} for $(I,U)$ is a set $\mA$ of compact Riemannian $n$-manifolds, $n > 2k$, such that
    \begin{enumerate}
        \item if $(M^n,g) \in \mA$, then $0 < Y_{(I,U)}(M^n,[g]) \leq Y_{(I,U)}(S^n,[\grd])$ with equality if and only if $(M^n,g)$ is conformally equivalent to $(S^n,\grd)$;
        \item if $(M^n,g) \in \mA$, then there is a positive minimizer of the $(I,U)$-Yamabe constant~\eqref{eqn:yamabe}; and
        \item if $(M^n,g) \in \mA$ and $\pi \colon \cM^n \to M^n$ is a finite connected covering, then $(\cM^n,\pi^\ast g) \in \mA$. 
    \end{enumerate}
\end{definition}

Our terminology is inspired by Aubin's observation~\cite{MR0431287} that if $Y_{Q_2}(M^n,[g]) < Y_{Q_2}(S^n,[\grd])$, then minimizers of the Yamabe constant exist.
Results of Aubin~\cite{MR0431287} and Schoen~\cite{MR788292} imply that the set of all compact Riemannian $n$-manifolds, $n \geq 3$, with positive Yamabe constant is a geometric Aubin set for the pair $(I,U) = (Q_2,C^\infty(M))$.
A result of Gursky, Hang, and Lin~\cite{MR3509928} implies that the set of all compact Riemannian manifolds with positive $Q_2$- and $Q_4$-Yamabe constant is a geometric Aubin set for the pair $(I,U) = (Q_4,C^\infty(M))$.
In Section~\ref{sec:examples}, we list all maximal examples of geometric Aubin sets known to the authors.

Our definitions capture the key ingredients of the Bettiol--Piccione argument:

\begin{theorem}
 \label{main-thm}
 Let $I$ be a CVI of weight $-2k$ and rank $r$, and let $U$ be a geometric cone.
 Suppose that there is a nonempty geometric Aubin set $\mA$ for $(I,U)$.
 If $(M^n,g) \in \mA$ and $\pi_1(M)$ has infinite profinite completion, then for each $\ell \in \bN$ there is a finite regular covering $\pi \colon \cM^n \to M^n$ such that $(\cM^n,[\pi^\ast g])$ admits pairwise nonhomothetic representatives $\{ g_j \}_{j=1}^\ell$ with constant $I$-curvature.
\end{theorem}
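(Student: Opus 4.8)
The plan is to produce the $\ell$ metrics as lifts of $(I,U)$-Yamabe minimizers living on a tower of intermediate finite covers of $M$, and to distinguish them by the value of their (constant) $I$-curvature after normalizing to unit volume. Two scaling facts drive the argument. First, if $\rho\colon N'\to N$ is a connected $d$-fold covering with $N,N'\in\mA$, then pulling back the positive minimizer on $N$---which lands in $U^{\rho^\ast g_N}$ by property (iii) of a geometric cone---and evaluating the Rayleigh quotient in \eqref{eqn:yamabe} gives
\[
 Y_{(I,U)}(N',[\rho^\ast g_N])\le d^{\frac{2k}{n}}\,Y_{(I,U)}(N,[g_N]),
\]
since both the numerator and the mass constraint scale by the degree $d$. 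Second, a unit-volume positive minimizer $\bar g$ on $N$ has constant $I$-curvature equal to $Y_{(I,U)}(N,[g_N])$---this follows from the Euler--Lagrange equation together with \eqref{eqn:I-to-D} and \eqref{eqn:total-I-to-D}---and because $I$ has weight $-2k$, lifting $\bar g$ to a $d$-fold cover and rescaling back to unit volume multiplies this constant by $d^{2k/n}$.

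The separation comes from a strict drop. Along a tower $M=P_0\leftarrow P_1\leftarrow\cdots\leftarrow P_m$ of connected covers, write $e_j:=\deg(P_{j+1}\to P_j)$, and for a finite common cover $\cM$ dominating all the $P_j$ record the unit-volume $I$-curvature value $w_j:=\deg(\cM\to P_j)^{2k/n}\,Y_{(I,U)}(P_j,[g])$ of the lift of the $P_j$-minimizer to $\cM$. The first scaling fact gives $w_{j+1}\le w_j$, so the $w_j$ are non-increasing. I would then prove the key lemma: if $e_j^{2k/n}\,Y_{(I,U)}(P_j,[g])\ge Y_{(I,U)}(S^n,[\grd])$, then $w_{j+1}<w_j$. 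Indeed $P_{j+1}$ is a closed manifold with infinite fundamental group, hence not conformally equivalent to $(S^n,\grd)$, so part (i) of the geometric Aubin set forces the \emph{strict} bound $Y_{(I,U)}(P_{j+1},[g])<Y_{(I,U)}(S^n,[\grd])\le e_j^{2k/n}\,Y_{(I,U)}(P_j,[g])$; clearing the degree factors turns this into $w_{j+1}<w_j$.

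It remains to build a tower with $\ell$ strict drops. Since $\pi_1(M)$---and hence $\pi_1(P_j)$ for every finite cover $P_j$---has infinite profinite completion, each such group admits finite-index subgroups of arbitrarily large index (see Section~\ref{sec:proof}). Starting from $P_0=M$, where $Y_{(I,U)}(P_0,[g])>0$, I inductively choose a connected cover $P_{j+1}\to P_j$ of degree exceeding the finite threshold $\bigl(Y_{(I,U)}(S^n,[\grd])/Y_{(I,U)}(P_j,[g])\bigr)^{n/2k}$; the lemma then yields $w_{j+1}<w_j$. After $\ell$ steps, replacing $P_\ell$ by the cover of $M$ associated with the normal core of $\pi_1(P_\ell)$ in $\pi_1(M)$ produces a finite \emph{regular} covering $\pi\colon\cM\to M$ dominating every $P_j$. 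Lifting the $\ell+1$ minimizers to $\cM$ and rescaling to unit volume gives metrics $g_0,\dots,g_\ell\in[\pi^\ast g]$ of constant $I$-curvature with pairwise distinct values $w_0>w_1>\cdots>w_\ell$. Finally, homothetic unit-volume metrics are isometric and therefore share the same constant $I$-curvature; as the $w_j$ are distinct, the $g_j$ are pairwise nonhomothetic, and any $\ell$ of them prove the theorem.

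The main obstacle is the strict-drop lemma, and within it the appeal to the \emph{strict} inequality in part (i) of the geometric Aubin set: were a nontrivial cover allowed to have the same $(I,U)$-Yamabe constant as the round sphere, the lifted values could merely be non-increasing and fail to separate. The one remaining technical point is the group theory---that infinite profinite completion is inherited by finite-index subgroups, so the tower can be continued to length $\ell$ with steps of arbitrarily large degree---which is exactly where the hypothesis on $\pi_1(M)$ is used.
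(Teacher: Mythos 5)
Your proposal is correct and follows essentially the same route as the paper: build a tower of finite covers whose degrees exceed the threshold $\bigl(Y_{(I,U)}(S^n,[\grd])/Y_{(I,U)}(P_j,[g])\bigr)^{n/2k}$, take positive unit-volume minimizers at each level (available by properties (ii)--(iii) of the geometric Aubin set), lift to a common cover, and separate them by the strictly decreasing normalized constant $I$-curvature values $w_j$, which is exactly the paper's chain $\mI^{(\Pi_j^\ell)^\ast g_\ell} < \mI^{(\Pi_j^{\ell-1})^\ast g_{\ell-1}}$. The only (immaterial) differences are that you obtain regularity over $M$ by passing to the normal core at the end rather than taking regular covers at each step via Lemma~\ref{bettiol-piccione}, and you derive strictness from the rigidity clause in part (i) of Definition~\ref{defn:geometric-aubin-set} rather than from choosing the degree strictly above the threshold.
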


Here we say that two Riemannian metrics $g_1,g_2$ on $M$ are \defn{homothetic} if there is a diffeomorphism $\Phi \in \Diff(M)$ and a constant $c>0$ such that $\Phi^\ast g_1 = c^2g_2$.
Theorem~\ref{main-thm} does not require the full strength of a geometric Aubin set;
it only requires that the set $\mA$ be closed under finite regular coverings and that there is a constant $C=C(\mA)>0$ such that if $(M^n,g) \in \mA$, then $Y_{(I,U)}(M^n,[g])>0$ and there is a representative $\hg \in [g]_U$ such that $\Vol_{\hg}(M)=1$ and $I^{\hg} \leq C$ is constant.

Theorem~\ref{main-thm} implies that there are infinitely many functions $u \in C^\infty(\cM;\bR_+)$ for which $I^{\cg_u}$ is constant in the universal cover $\cpi \colon \cM^n \to M^n$, where $\cg_u := u^{\frac{2r}{n-2k}}\cpi^\ast g$.
As in our companion paper~\cite{ACPW}, the Ferrand--Obata Theorem~\cites{MR1371767,MR0303464,MR1334876} implies we can in fact find infinitely many nonhomothetic solutions in the universal cover:

\begin{theorem}
 \label{infinite-main-thm}
 Let $I$ be a CVI of weight $-2k$ and rank $r$, and let $U$ be a geometric cone.
 Suppose that there is a nonempty geometric Aubin set $\mA$ for $(I,U)$.
 Suppose that $(M^n,g) \in \mA$ is such that $\pi_1(M)$ has infinite profinite completion.
 If the conformal universal cover of $(M^n,[g])$ is not conformal to Euclidean space, then there is a countable set $\{ \cg_j \}_{j\in \bN}$ of pairwise nonhomothetic periodic metrics $\cg_j \in [\cpi^\ast g]$, each of which has constant $I$-curvature.
\end{theorem}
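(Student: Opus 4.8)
The plan is to pass to the universal cover, manufacture the required metrics as pullbacks of Yamabe-type minimizers along a tower of finite covers (exactly as in the proof of Theorem~\ref{main-thm}), and then certify that infinitely many of them are pairwise nonhomothetic. The hard part is the last step, and it is precisely there that the hypothesis ``not conformal to Euclidean space'' enters, through a conformal rigidity theorem.

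First I would fix $\Gamma:=\pi_1(M)$ and the universal cover $\cpi\colon\cM\to M$. Since $\Gamma$ has infinite profinite completion, there is a nested sequence of finite-index normal subgroups $\Gamma=\Gamma_0\supsetneq\Gamma_1\supsetneq\cdots$ with $[\Gamma:\Gamma_m]\to\infty$, giving a tower of finite regular covers $P_m:=\cM/\Gamma_m\to M$. By property (iii) of the geometric Aubin set, $(P_m,\pi_m^\ast g)\in\mA$, so by property (ii) there is a positive minimizer $\hg_m$ of unit volume with constant $I$-curvature $c_m=Y_{(I,U)}^+(P_m,[\pi_m^\ast g])$, and property (i) gives the uniform bound $0<c_m\le Y_{(I,U)}(S^n,[\grd])$. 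Pulling back along $q_m\colon\cM\to P_m$ produces $\cg_m:=q_m^\ast\hg_m\in[\cpi^\ast g]$, a $\Gamma_m$-periodic metric with the same constant $I$-curvature $c_m$, by naturality of $I$ under the local isometry $q_m$. This yields a countable family $\{\cg_m\}$ of periodic constant-$I$-curvature metrics lying in the single conformal class $[\cpi^\ast g]$.

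The decisive reduction is that every $\cg_m$ lies in the fixed class $[\cpi^\ast g]$, so any homothety $\Phi^\ast\cg_m=\lambda^2\cg_{m'}$ is forced to be a conformal automorphism $\Phi\in\Conf(\cM,[\cpi^\ast g])$. Here I would invoke the resolution of the Lichnerowicz conjecture (Lelong-Ferrand, Obata, Schoen): as $\cM$ is noncompact (since $\Gamma$ is infinite) and, by hypothesis, $(\cM,[\cpi^\ast g])$ is not conformal to Euclidean space nor to the round sphere, the conformal group is inessential and preserves some $\hg\in[\cpi^\ast g]$; since the deck group satisfies $\Gamma\subseteq\Conf$, this $\hg$ descends to $M$, and $G:=\mathrm{Isom}(\hg)\supseteq\Gamma$ is a Lie group acting properly and cocompactly. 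In particular every homothety between the $\cg_m$ lies in $G$, and $\mathrm{Isom}(\cg_m)\subseteq G$ always contains $\Gamma_m$. To separate the metrics I would use the scale-invariant quantity $\mathcal Q(\cg):=c\cdot\Vol_{\cg}\bigl(\cM/\mathrm{Isom}(\cg)\bigr)^{2k/n}$, where $c=I^{\cg}$ is the constant curvature value and the volume is taken over the quotient by the \emph{full} isometry group. Being invariant under diffeomorphisms and under constant rescalings (which send $c\mapsto\lambda^{-2k}c$ and the quotient volume to $\lambda^{n}$ times itself), $\mathcal Q$ is a homothety invariant. I would then argue by contradiction: if the $\cg_m$ fell into finitely many homothety classes, one class would contain infinitely many of them; their isometry groups would then be mutually $G$-conjugate, hence of a common finite $\hg$-covolume $v_0$, giving $\Vol_{\cg_m}(\cM/\mathrm{Isom}(\cg_m))=v_0/\bigl([\Gamma:\Gamma_m]\Vol_{\hg}(M)\bigr)\to0$, so that $c_m=\mathcal Q(\cg_m)\cdot\Vol_{\cg_m}(\cM/\mathrm{Isom}(\cg_m))^{-2k/n}\to\infty$, contradicting the uniform bound $c_m\le Y_{(I,U)}(S^n,[\grd])$. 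Extracting one $\cg_m$ per value of $\mathcal Q$ then produces the desired countable family $\{\cg_j\}_{j\in\bN}$.

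The main obstacle is exactly this distinctness argument, and within it the conformal rigidity step: the non-Euclidean hypothesis is what licenses the reduction of all homotheties to the proper isometry group $G$ of a single fixed background metric, without which $\Conf(\cM,[\cpi^\ast g])$ could be large enough to identify the $\cg_m$ with one another and collapse the count. A secondary technical point, to be handled with care rather than avoided, is that $G$ may be positive-dimensional (for instance when $(M,g)$ admits Killing fields); the fundamental-domain volumes and the invariant $\mathcal Q$ must then be interpreted via the induced measure on the quotient of the proper $G$-action, but since $\hg$-isometries preserve this measure, the covolume comparison driving the contradiction goes through unchanged.
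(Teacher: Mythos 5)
Your overall architecture is sound and, at the decisive step, rests on exactly the same input as the paper: the Ferrand--Obata theorem for noncompact manifolds \cite{MR1371767}. The difference is in how that input is deployed. The paper's Lemma~\ref{construction-lemma} argues in the contrapositive: assuming infinitely many of the lifted minimizers are homothetic to a fixed one, it evaluates the Yamabe quotient of a fundamental domain $F$ in two ways, concludes that the resulting conformal maps $\Psi_j$ satisfy $\Vol(\Psi_j(F))\to 0$ while $\Psi_j(F)\cap F\neq\emptyset$, hence that $\Conf(\cM,[\cpi^\ast g])$ does not act properly, and then invokes Ferrand to conclude that the universal cover is conformally Euclidean; Theorem~\ref{precise-main-thm} then extracts the infinite nonhomothetic family. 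You instead use Ferrand in the positive direction (properness, hence an invariant metric $\hg$ and the identification of all homotheties with elements of $\mathrm{Isom}(\hg)$) and then run a covolume comparison inside $\mathrm{Isom}(\hg)$. Both derive the contradiction from the same tension: quotient volumes forced to zero against a uniform upper bound on the constant $I$-curvature supplied by property (i) of the geometric Aubin set.

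The one genuine soft spot is your invariant $\mathcal Q(\cg)=c\cdot\Vol_{\cg}\bigl(\cM/\mathrm{Isom}(\cg)\bigr)^{2k/n}$. In the relevant examples the groups $\mathrm{Isom}(\cg_m)$ are positive-dimensional (e.g.\ for $M=S^1\times S^{n-1}$ the minimizers are rotationally invariant), and then $\Vol_{\cg}(\cM/\mathrm{Isom}(\cg))$ is not a number attached to $\cg$ alone: it depends on a choice of Haar measure on $\mathrm{Isom}(\cg)$, so $\mathcal Q$ is not a well-defined homothety invariant and ``one $\cg_m$ per value of $\mathcal Q$'' does not parse. You flag this but assert the comparison ``goes through unchanged,'' which is too quick. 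It can be repaired---either by carrying consistently transported Haar measures through Weil's quotient-integration formula on each putative infinite homothety class, or, more simply, by dispensing with the full isometry groups: if $\Phi_i^\ast\cg_{m_1}=\lambda_i^2\cg_{m_i}$ with $\Phi_i\in\mathrm{Isom}(\hg)$, then $c_{m_i}=\lambda_i^{2k}c_{m_1}\le Y_{(I,U)}(S^n,[\grd])$ bounds $\lambda_i$, the discrete lattices $\Phi_i\Gamma_{m_i}\Phi_i^{-1}$ have $\cg_{m_1}$-covolume $\lambda_i^{n}$ and hence bounded, and since the conformal factor relating $\cg_{m_1}$ to $\hg$ is $\Gamma_{m_1}$-periodic hence bounded, their $\hg$-covolumes are bounded too, contradicting $\Vol_{\hg}(\cM/\Phi_i\Gamma_{m_i}\Phi_i^{-1})=[\Gamma:\Gamma_{m_i}]\Vol_{\hg}(M)\to\infty$. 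With that repair your proof is correct; the paper's route via Lemma~\ref{construction-lemma} avoids the issue entirely by never introducing the isometry groups.
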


Here the \defn{conformal universal cover} of $(M^n,[g])$ is the conformal manifold $(\cM^n,[\cpi^\ast g])$, where $\cpi \colon \cM^n \to M^n$ is the universal cover.
A metric $\cg \in [ \cpi^\ast g]$ is \defn{periodic} if it descends to a compact quotient of $\cM^n$.

We emphasize that Theorem~\ref{infinite-main-thm} is new even for the $Q_4$-curvature (cf.\ \cite{MR4251294}).
Compact conformal quotients of Euclidean space are classified~\cite{MR0987162}.
This allows one to replace the assumption on the conformal universal cover in Theorem~\ref{infinite-main-thm} by, for example, the assumption that $(M^n,g)$ has positive Yamabe constant.

We conclude this introduction with a brief comment about the differences between this paper and its companion~\cite{ACPW}.
The companion paper focuses entirely on the case of the scalar curvature, where Theorem~\ref{main-thm} recovers the aforementioned result of Bettiol and Piccione~\cite{MR3803113}*{Theorem~1.3} and where Theorem~\ref{infinite-main-thm} is new.
That paper also discusses the implications of these results for the singular Yamabe problem, including the best known estimates on the cardinality of the set of nonhomothetic solutions on $S^n \setminus S^k$.
There are many simplifications coming from the affirmative resolution of the Yamabe Problem~\cite{MR888880}.
In contrast, the present paper describes an axiomatic framework under which the Bettiol--Piccione argument can be applied.
Theorems~\ref{q-corollary}--\ref{vk-corollary} indicate the power of this framework, though additional work is needed to determine the full range of CVIs which admit nonempty geometric Aubin sets.

This article is organized as follows:

In Section~\ref{sec:geometry} we summarize the key definitions and facts involving CVIs and further discuss Yamabe-type constants.

In Section~\ref{sec:examples}, we list all maximal geometric Aubin sets known to the authors, including a new geometric Aubin set for the higher-order $Q$-curvatures.

In Section~\ref{sec:proof}, we recall a key property~\cite{MR3803113} of compact manifolds whose fundamental group has infinite profinite completion.
We then prove Theorems~\ref{main-thm} and \ref{infinite-main-thm}, and derive Theorems~\ref{q-corollary}, \ref{lcf-q-corollary}, and~\ref{vk-corollary} as corollaries.

In Appendix~\ref{app:aubin}, we prove a generalization of the Aubin Lemma~\cites{MR0433500,MR2301449}:
the $Q_{2k}$-Yamabe constant, when it is positive and admits positive minimizers, strictly increases when passing to a finite connected covering.
This fact is needed to produce geometric Aubin sets for the $Q_{2k}$-curvatures.

\section{Conformally variational Riemannian invariants}
\label{sec:geometry}
This section summarizes necessary facts and definitions involving conformally variational Riemannian invariants~\cites{MR4392224,MR3955546}.

A \defn{natural Riemannian scalar invariant} on $n$-manifolds is an assignment $I$ to each Riemannian manifold $(M^n,g)$ of a function $I^g \in C^\infty(M)$ by a universal linear combination $I$ of complete contractions of $g$, $g^{-1}$, and covariant derivatives of the Riemann curvature tensor.
Universality means that the coefficients do not depend on $(M,g)$.
We say that $I$ is \defn{homogeneous} of \defn{weight} $-2k$ if $I^{c^2g} = c^{-2k}I^g$ for all constants $c>0$.
Note that $k$ is necessarily a nonnegative integer.
When the metric $g$ is clear from context, we write $I$ for $I^g$.

A \defn{conformally variational invariant} (CVI) is a natural homogeneous scalar Riemannian invariant $I$ such that for all Riemannian manifolds $(M^n,g)$, the linear operator
\begin{equation*}
    C^\infty(M) \ni u \mapsto \left. \frac{\partial}{\partial t}\right|_{t=0} I^{e^{2tu}g} \in C^\infty(M)
\end{equation*}
is formally self-adjoint with respect to the $L^2$-inner product induced by the Riemannian volume element $\dvol_g$.
The formal self-adjointness of this operator is equivalent~\cite{MR2389992}*{Lemma~2} to the existence of a natural Riemannian functional $\mF \colon [g] \to \bR$ such that
\begin{equation*}
    \left. \frac{\mathrm{d}}{\mathrm{d}t}\right|_{t=0} \mF(e^{2tu}g) = \int_M uI^g \dvol_g
\end{equation*}
for all $u \in C^\infty(M)$.
When $n>2k$, one can take $\mF(g) = \frac{1}{n-2k}\int_M I^g\dvol_g$.

A \defn{natural $(r-1)$-differential operator}, $r \in \bN := \{1,2,3,\dotsc\}$, is an assignment $D$ to each Riemannian manifold $(M^n,g)$ of an operator
\begin{equation*}
    D^g \colon \left( C^\infty(M) \right)^{\otimes(r-1)} \to C^\infty(M) 
\end{equation*}
such that $D^g\left(u_1 \otimes \dotsm \otimes u_{r-1}\right)$ can be expressed as a universal linear combination of complete contractions of tensor products of $g$, $g^{-1}$, covariant derivatives of the Riemann curvature tensor, and covariant derivatives of the functions $u_j$.
The tensor product is over $\bR$;
thus $(r-1)$-differential operators are multilinear operators which are differential when all but one of the factors in the tensor product is held fixed.
Note that a natural $0$-differential operator is a natural scalar Riemannian invariant.
A \defn{natural polydifferential operator} is an operator which is a natural $(r-1)$-differential operator for some $r \in \mathbb{N}$.

We now restrict our attention to natural polydifferential operators that are homogeneous of degree $-2 k$ in $g$ on manifolds of dimension $n \geq 2k$.

A natural $(r-1)$-differential operator $D$ is \defn{conformally covariant} if  there is a multi-index $a \in \bR^{r-1}$ and a constant $b \in \bR$ such that
$$
D^{e^{2 \Upsilon} g}\left(u_1 \otimes \dotsm \otimes u_{r-1}\right)=e^{-b \Upsilon} D^g\left(e^{a_1 \Upsilon} u_1 \otimes \dotsm \otimes e^{a_{r-1}\Upsilon} u_{r-1}\right)
$$
for all Riemannian manifolds $\left(M^n, g\right)$ and all $\Upsilon, u_1, \ldots, u_{r-1} \in C^{\infty}(M)$.
In this case $D$ is homogeneous of degree $\lv a\rv - b := a_1 + \dotsm + a_{r-1} - b$, and we call $(a, b)$ the \defn{bidegree} of $D$.

A natural $(r-1)$-differential operator $D$ is \defn{formally self-adjoint} if for every Riemannian manifold $\left(M^n, g\right)$ and every $u_1, \ldots, u_{r} \in C^{\infty}(M)$ such that $u_1 \cdots u_{r}$ has compact support, the \defn{Dirichlet form}
$$
\kD^g\left(u_1 \otimes \dotsm \otimes u_{r}\right) := \int_M u_1 D^g\left(u_2 \otimes \dotsm \otimes u_{r}\right) \dvol_g
$$
is symmetric;
i.e.\ $\kD(u_{\sigma(1)} \otimes \dotsm \otimes u_{\sigma(r)}) = \kD(u_1\otimes \dotsm \otimes u_r)$ for all permutations $\sigma$ of $\{ 1, \dotsc, r\}$.
Note that if $D$ is formally self-adjoint, then the map $\left(u_1 \otimes \dotsm \otimes u_{r-1}\right) \mapsto D\left(u_1 \otimes \dotsm \otimes u_{r-1}\right)$ is symmetric.
If $D \colon \bigl( C^\infty(M) \bigr)^{\otimes{(r-1)}} \to C^\infty(M)$ is a formally self-adjoint, conformally covariant, polydifferential operator which is homogeneous of degree $-2k$, then it has bidegree $\bigl( \frac{n-2k}{r}, \frac{n(r-1)+2k)}{r} \bigr)$~\cite{MR4392224}*{Lemma~3.8}.
In particular, if additionally $n>2k$, then
\begin{equation*}
    \int_M u_1 D^{g_w}(u_2 \otimes \dotsm \otimes u_r) \dvol_{g_w} = \int_M wu_1 D^g (wu_2 \otimes \dotsm \otimes wu_{r}) \dvol_g ,
\end{equation*}
where $g_w := w^{\frac{2r}{n-2k}}g$.

Let $I$ be a CVI of weight $-2 k > -n$. A natural $(r-1)$-differential operator $D$ \defn{recovers} $I$ if for all Riemannian manifolds $(M^n,g)$, one has
    \begin{equation}\label{relationpolydifferentialCVI}
        D^g(1\otimes \dotsm \otimes1)=\left(\frac{n-2 k}{r}\right)^{r-1} I^g .
    \end{equation}
(There is a corresponding notion~\cite{MR4392224} for dimension $n=2k$ which is not needed in this paper.)
A natural $(r-1)$-differential operator is \defn{associated} to $I$ if it is formally self-adjoint, conformally covariant, and recovers $I$. 
Note that if $D$ recovers $I$, then Equation~\eqref{relationpolydifferentialCVI} is equivalent to
\begin{equation}\label{equivalence}
    \left(\frac{n-2 k}{r}\right)^{r-1} u^{\frac{n(r-1) + 2k}{n-2k}} I^{g_u}=D^g\bigl( u^{\otimes(r-1)} \bigr)
\end{equation}
for all $u \in C^\infty(M;\bR_+)$, where again $g_u=u^{\frac{2r}{n-2k}}g$.

Case, Lin, and Yuan~\cite{MR4392224}*{Theorem~1.6} showed that each CVI has an associated polydifferential operator. 
More precisely, if $I$ is a CVI of weight $-2 k$, then there is a minimal integer $1 \leq r \leq 2 k$ for which there exists a natural $(r-1)$-differential operator associated with $I$.
This number is the rank of $I$ defined in the introduction.

Let $I$ be a CVI of weight $-2k>-n$ with rank $r$.
The \defn{$I$-Yamabe quotient} of a compact Riemannian manifold $(M^n,g)$ is
\begin{equation}\label{yamabequotient}
	\mathcal{I}^{g}_{2k} := \frac{\int_M I_{2k}^{g}\dvol_g}{{\rm Vol}_{g}(M)^{\frac{n-2k}{n}}} .
\end{equation}
The transformation rule~\eqref{equivalence} implies that if $u \in C^\infty(M;\bR_+)$, then
\begin{equation*}
    \mathcal{I}^{g_u}_{2k} = \left(\frac{r}{n-2k}\right)^{r-1}\frac{\int_M u D^g\left(u^{\otimes(r-1)}\right) \dvol_{g}}{\left(\int_{M} u^{\frac{rn}{n-2k}} \dvol_g\right)^{\frac{n-2k}{n}}} .
\end{equation*}
If $U$ is a geometric cone, then the metric $(I,U)$-Yamabe constant~\eqref{eqn:geometric-yamabe} is
\begin{equation}\label{eq:Iyamabeinvariant}
    Y_{(I,U)}^+ = \inf \left\{ \mI_{2k}^{g_u} \suchthat g_u \in [g]_U \right\}.
\end{equation}
where we recall that $[g]_U$ is defined by Equation~\eqref{eq:conformalclass}.

\section{Examples of geometric Aubin sets}\label{sec:examples}

In this section, we list all known maximal geometric Aubin sets.
We also identify a geometric Aubin set for the higher-order $Q$-curvatures.
To that end, we denote by $\mathcal{M}^n$ the set of all compact Riemannian $n$-manifolds.

\subsection{Second-order Q-curvature}\label{sec:yamabe}
The second-order $Q$-curvature is proportional to the scalar curvature, the subject of the well-studied Yamabe Problem~\cite{MR888880}.
Since $Y_{Q_2}(M^n,[g])>0$ if and only if there exists a metric $\hg \in [g]$ with positive scalar curvature, the set
\begin{equation*}
    \mathcal{A} = \left\{ (M^n,g) \in \mathcal{M}^n \suchthat Y_{Q_2}(M^n,[g])>0 \right\}
\end{equation*}
is closed under finite connected coverings.
The resolution of the Yamabe Problem implies that $\mA$ is a geometric Aubin set for $(Q_2,C^\infty(M))$.

\subsection{Renormalized volume coefficients}\label{renormalized-volume-coefficients}
The renormalized volume coefficients $v_k$, defined on Riemannian manifolds of dimension $n \geq 2k$, are CVIs for which the equation $v_k^{g_u}=c$ is in general a fully nonlinear second-order PDE in the conformal factor~\cites{MR2493186,MR3955546}.
Solutions of this equation are known to exist when $k \in \{1,2\}$ or when restricted to locally conformally flat manifolds~\cites{MR2072215,MR2362323,MR2290138};
in these cases $v_k$ is the $\sigma_k$-curvature~\cites{MR1738176,MR2351380}.

We reformulate the aforementioned results in terms of a geometric Aubin set.
To that end, let
\begin{equation*}
    \Sch := \frac{1}{n-2}\left( \Ric - \frac{R}{2(n-1)}g \right)
\end{equation*}
be the Schouten tensor of $(M^n,g)$, regarded, using $g^{-1}$, as a section of $T^\ast M \otimes TM$.
Given $j \in \bN$, denote by $\sigma_j( \Sch)$ the $j$-th elementary symmetric function of the eigenvalues of $ \Sch$.
The \defn{positive $j$-cone} is the set
\begin{equation*}
    \Gamma_j^+ := \left\{ \hat{g} \in [g] \suchthat \sigma_1( \Sch), \sigma_2( \Sch), \dotsc, \sigma_j( \Sch) > 0 \right\} .
\end{equation*}
If $\hat g \in \Gamma_k^+$, then the equation $\sigma_k^{\hat g}=f$, expressed as a PDE in the conformal factor $\hg = e^{2u}g$, is elliptic~\cite{MR1738176}*{Section~6.3}.
Ellipticity is key in producing a geometric Aubin set:

\begin{proposition}
    \label{vk-aubin-set}
    Let $k \in \bN$.
    Pick an integer $n > 2k$.
    Then
    \begin{equation*}
        \mA := \left\{ (M^n,g) \in \mM^n \suchthat \Gamma_k^+ \not= \emptyset, W^g = 0 \right\}
    \end{equation*}
    is a geometric Aubin set for $(v_k,\Gamma_k^+)$, where $W^g$ is the Weyl tensor of $g$.
\end{proposition}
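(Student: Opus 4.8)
The plan is to reduce the three defining properties of a geometric Aubin set (Definition~\ref{defn:geometric-aubin-set}) to known facts about the $\sigma_k$-Yamabe problem. The crucial preliminary observation is that $v_k$ has maximal rank $r = 2k$, and that on any locally conformally flat manifold one has $v_k = c_{n,k}\,\sigma_k(\Sch)$ for a dimensional constant $c_{n,k}$ \cite{MR2351380} which is positive in the range $n>2k$ (as one checks on the round sphere, where $\Sch = \tfrac12\grd$). Hence, on every element of $\mA$, the $(v_k,\Gamma_k^+)$-Yamabe constant is a positive multiple of the $\sigma_k$-Yamabe constant, and---since the equation $\sigma_k^{\hg}=f$ is elliptic on $\Gamma_k^+$ \cite{MR1738176}---its positive minimizers solve an elliptic fully nonlinear equation. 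All three conditions can then be read off from the $\sigma_k$-Yamabe literature.

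First I would dispatch condition (iii), closure under finite connected coverings, which is purely local. If $\pi \colon \cM^n \to M^n$ is such a covering, then $\pi$ is a local isometry, so the Weyl tensor and the Schouten endomorphism pull back pointwise: $W^{\pi^\ast g} = \pi^\ast W^g$, and the eigenvalues of $\Sch^{\pi^\ast \hg}$ are the pullbacks of those of $\Sch^{\hg}$. Consequently $W^g = 0$ forces $W^{\pi^\ast g} = 0$, and any $\hg \in \Gamma_k^+$ on $M$ pulls back to a metric $\pi^\ast \hg \in \Gamma_k^+$ on $\cM$, so $\Gamma_k^+ \neq \emptyset$ is preserved. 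These same two observations also show that $\Gamma_k^+$ is a geometric cone in the sense of Section~\ref{sec:intro}. Thus $(\cM^n, \pi^\ast g) \in \mA$.

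For condition (ii) and both bounds in condition (i) I would invoke the resolution of the $\sigma_k$-Yamabe problem on locally conformally flat manifolds with $\Gamma_k^+ \neq \emptyset$ \cites{MR2072215,MR2362323,MR2290138}. This yields a smooth positive minimizer $g_u \in \Gamma_k^+$ of \eqref{eqn:yamabe}, which proves (ii). Since this minimizer lies in the open cone $\Gamma_k^+$, where $\sigma_k(\Sch) > 0$ and hence $v_k > 0$, the minimal value $Y_{(v_k,\Gamma_k^+)}(M^n,[g])$ is strictly positive, giving the lower bound in (i). The remaining content of (i)---the sharp upper bound $Y_{(v_k,\Gamma_k^+)}(M^n,[g]) \leq Y_{(v_k,\Gamma_k^+)}(S^n,[\grd])$ together with the rigidity statement that equality forces $(M^n,g)$ to be conformally equivalent to $(S^n,\grd)$---is the sharp Aubin-type inequality for $\sigma_k$ on the positive cone of locally conformally flat manifolds, whose extremals are exactly the round spheres.

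I expect this last step to be the main obstacle: establishing positivity and covering-invariance is essentially formal, and the existence statement is quotable, but the sharp comparison with the sphere and its rigidity rest on the deeper structure of locally conformally flat manifolds in $\Gamma_k^+$ (e.g.\ via the developing map into $S^n$ and the classification of the equality case). One should also carry out the bookkeeping that the proportionality constant $c_{n,k}$ is indeed positive for $n>2k$, so that the cone $\Gamma_k^+$ genuinely corresponds to positive $v_k$ and to a positive total curvature, and that the normalizations agree with those in \eqref{eqn:yamabe} and \eqref{eq:Iyamabeinvariant}.
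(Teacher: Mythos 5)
Your proposal follows essentially the same route as the paper: observe that closure under coverings is a pointwise/local statement about $W$ and the eigenvalues of $\Sch$, use $W^g=0$ (with $n\geq 5$) to conclude local conformal flatness, identify $v_k$ with a nonzero multiple of $\sigma_k(\Sch)$ via \cite{MR2351380}, and then quote the resolution of the locally conformally flat $\sigma_k$-Yamabe problem (the paper cites Guan--Wang \cite{MR2072215}*{Theorem~1(A)}, which already contains the sharp comparison with the sphere and the rigidity of the equality case that you flag as the main obstacle). The one point you miss is $k=1$: there $n>2k$ allows $n=3$, where $W\equiv 0$ is vacuous and the manifold need not be locally conformally flat, so your blanket reduction to the LCF $\sigma_k$ literature does not apply; the paper handles this by noting $v_1=Q_2$ and appealing directly to the Yamabe problem (Subsection~\ref{sec:yamabe}), which is the trivial fix you should add.
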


\begin{proof}
    Since $v_1 = Q_2$, the case $k=1$ follows from the discussion in Subsection~\ref{sec:yamabe}.

    Suppose that $k \geq 2$.
    Since the conditions $\Gamma_k^+\not=\emptyset$ and $W^g=0$ are equivalent to the existence of a metric $\hg \in [g]$ with $\sigma_j(\Sch^{\hg}) > 0$, $1 \leq j \leq k$, and $W^{\hg}=0$, we see that $\mA$ is closed under finite connected coverings.
    Let $(M^n,g) \in \mA$.
    Since $n \geq 5$ and the Weyl tensor vanishes, $(M^n,g)$ is locally conformally flat.
    Therefore $v_k$ is a nonzero multiple of the $\sigma_k$-curvature~\cite{MR2351380}*{Proposition~1}.
    A result of Guan and Wang~\cite{MR2072215}*{Theorem~1(A)} implies that $\mA$ is a geometric Aubin set.
\end{proof}

\subsection{Fourth-order Q-curvature}
The difficulty in studying the $\sigma_k$-curvatures comes from the need to restrict to an elliptic cone.
In contrast, the difficulty in studying the higher-order $Q$-curvatures~\cite{MR1316845} comes from the general lack of a maximum principle for higher-order operators.
The most general existence results are available in the fourth-order case~\cites{MR3420504,MR3518237,MR3509928}:

\begin{proposition}
    \label{q4-aubin-set}
    Fix an integer $n \geq 6$.
    Then
    \begin{equation*}
        \mA := \left\{ (M^n,g) \in \mM^n \suchthat Y_{Q_2}(M^n,[g]), Y_{Q_4}(M^n,[g]) > 0 \right\}
    \end{equation*}
    is a geometric Aubin set for $(Q_4,C^\infty(M))$.
\end{proposition}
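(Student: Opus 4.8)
The plan is to verify the three defining properties of a geometric Aubin set (Definition~\ref{defn:geometric-aubin-set}) for the pair $(Q_4, C^\infty(M))$. Recall that $Q_4$ has weight $-4$ and rank $r=2$, so the associated operator is the \emph{linear} Paneitz operator $P_4$ (normalized by $P_4 1 = \frac{n-4}{2}Q_4$, as required by Equation~\eqref{eqn:I-to-D}), and the $(Q_4,C^\infty(M))$-Yamabe constant reduces to the Rayleigh quotient
\begin{equation*}
 Y_{Q_4}(M^n,[g]) = \frac{2}{n-4}\inf_{0 \neq u \in C^\infty(M)} \frac{\int_M u\, P_4 u \, \dvol_g}{\bigl( \int_M \lv u \rv^{\frac{2n}{n-4}} \dvol_g \bigr)^{\frac{n-4}{n}}},
\end{equation*}
whose positivity is equivalent to coercivity of $P_4$. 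In particular, the positivity required in property~(i) is immediate from membership in $\mA$, so the substantive content of~(i) is the sharp upper bound and rigidity.

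The first step is closedness under finite connected coverings (property~(iii)). The condition $Y_{Q_2}(M^n,[g])>0$ is equivalent to the existence of a conformal metric of positive scalar curvature, which pulls back under any covering $\pi \colon \cM^n \to M^n$, so $Y_{Q_2}(\cM^n,[\pi^\ast g])>0$. For the condition $Y_{Q_4}>0$, I would apply the generalized Aubin Lemma of Appendix~\ref{app:aubin}: since $Y_{Q_4}(M^n,[g])>0$, it yields $Y_{Q_4}(\cM^n,[\pi^\ast g]) > Y_{Q_4}(M^n,[g]) > 0$. Hence $(\cM^n,\pi^\ast g) \in \mA$. (Alternatively, one may use the Gursky--Hang--Lin characterization of positivity of $Y_{Q_4}$, given $Y_{Q_2}>0$, in terms of a conformal representative with pointwise positive $Q_4$-curvature, a condition manifestly preserved by pullback.)

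The second step is to establish properties~(i) and~(ii), which together amount to the fourth-order Yamabe existence theory and are supplied by~\cites{MR3420504,MR3518237,MR3509928}. The upper bound $Y_{Q_4}(M^n,[g]) \leq Y_{Q_4}(S^n,[\grd])$ follows by testing the Rayleigh quotient with functions concentrating at a point, whose leading singular profile is the standard bubble, together with the rigidity statement that equality forces $(M^n,g)$ to be conformally round. When the inequality is strict, a positive smooth minimizer exists---equivalently, a conformal metric of constant $Q_4$-curvature---by the concentration--compactness argument adapted to $P_4$, using that the hypotheses $Y_{Q_2}>0$ and $Y_{Q_4}>0$ guarantee a positive Green's function for the Paneitz operator (the fourth-order substitute for the maximum principle). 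It remains to show that the strict inequality $Y_{Q_4}(M^n,[g]) < Y_{Q_4}(S^n,[\grd])$ holds whenever $(M^n,g)$ is not conformally the round sphere; this is where the restriction $n \geq 6$ enters, through refined test-function expansions that detect the Weyl tensor (or a fourth-order positive-mass term) and produce a strictly negative correction.

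The main obstacle is precisely this last estimate: ruling out equality with the sphere for non-spherical $(M^n,g)$. The upper bound, the rigidity, and the positivity of the Green's function are comparatively formal, and closedness under coverings is a soft argument given Appendix~\ref{app:aubin}; but the sharp strict inequality requires the delicate fourth-order test-function analysis of Gursky--Hang--Lin, which is exactly why one restricts to $n\geq 6$ and imposes positivity of both $Y_{Q_2}$ and $Y_{Q_4}$. Since each of these inputs is available in the cited references, the proof consists of assembling them into the three conditions of Definition~\ref{defn:geometric-aubin-set}.
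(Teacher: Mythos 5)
Your overall architecture matches the paper's: closedness under coverings via a pointwise-positivity characterization, positivity of the Paneitz Green's function from $Q_2,Q_4>0$, the sharp comparison with the sphere plus rigidity, and then an existence theorem for the minimizer. However, your primary argument for property (iii) is a misapplication of the Aubin Lemma. Lemma~\ref{aubin-lemma} takes as \emph{hypotheses} that $Y_{Q_{2k}}(\cX^n,[\pi^\ast g])>0$ and that a positive minimizer exists \emph{on the cover}, and concludes $Y_{Q_{2k}}(X^n,[g])<Y_{Q_{2k}}(\cX^n,[\pi^\ast g])$. You invoke it in the opposite direction, inferring positivity upstairs from positivity downstairs; the lemma gives no such implication, and its hypotheses (positivity and a positive minimizer on $\cM$) are precisely what you are trying to establish, so the argument is circular. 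Your parenthetical alternative is the correct route and is exactly what the paper does: by Gursky--Hang--Lin, $Y_{Q_2},Y_{Q_4}>0$ is equivalent to the existence of a conformal representative with pointwise positive $Q_2$ and $Q_4$, a condition manifestly preserved by pullback under finite connected coverings. That should be promoted from an aside to the actual proof of (iii); the Aubin Lemma is used elsewhere in the paper (in Proposition~\ref{case-malchiodi}) only after a positive minimizer on the cover has already been produced.

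Two smaller points. First, property (ii) requires a positive minimizer for \emph{every} $(M^n,g)\in\mA$, including the case where $(M^n,[g])$ is conformally the round sphere; your concentration--compactness argument only applies when the inequality with the sphere is strict, so you must separately cite that $Y_{Q_4}(S^n,[\grd])$ is attained by a positive function (Beckner), as the paper does. Second, the paper obtains the sharp inequality and rigidity not from a Gursky--Hang--Lin--style test-function expansion but from a Green's-function/positive-mass-type result, with existence supplied by Mazumdar's theorem; this is only a difference in which references carry the load, not in the structure of the argument.
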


\begin{remark}
    It is expected~\cite{MR3509928}*{p.\ 1352} that Proposition~\ref{q4-aubin-set} is true for all $n \geq 5$.
\end{remark}

\begin{proof}
    The condition $Y_{Q_2}(M^n,[g]), Y_{Q_4}(M^n,[g]) > 0$ is equivalent~\cite{MR3509928}*{Theorem~1.1} to the existence of a metric $\hg \in [g]$ such that $Q_2^{\hg},Q_4^{\hg}>0$.
    Thus, $\mA$ is closed under finite connected coverings.

    Let $(M^n,g) \in \mA$.
    As noted above, we may assume that $Q_2^g,Q_4^g > 0$.
    Thus the Green's function for the Paneitz operator is positive~\cite{MR3420504}*{Proposition~C}.
    Since $Y_{Q_2}(M^n,[g])>0$, we deduce that
    \begin{equation*}
        Y_{Q_4}(M^n,[g]) \leq Y_{Q_4}(S^n,[\grd])
    \end{equation*}
    with equality if and only if $(M^n,[g])$ is conformally diffeomorphic to $(S^n,[\grd])$~\cite{MR4330485}*{Theorem~1.3}.
    Combining an existence result of Mazumdar~\cite{MR3542966}*{Theorem~3} with the fact that $Y_{Q_4}(S^n,[\grd])$ is achieved by a positive function~\cite{Beckner1993}*{Theorem~6} implies that there is a positive minimizer of $Y_{Q_4}(M^n,[\grd])$.
\end{proof}

\subsection{Higher-order Q-curvatures}\label{higher-order-q}
The aforementioned result of Mazumdar states that if $(M^n,g)$, $n > 2k$, is compact Riemannian $n$-manifold for which the GJMS operator~\cite{MR1190438} $P_{2k}$ of order $2k$ has positive Green's function and satisfies $0 < Y_{Q_{2k}}(M^n,[g]) < Y_{Q_{2k}}(S^n,[\grd])$, then there is a positive minimizer of $Y_{Q_{2k}}(M^n,[g])$.
The condition $Y_{Q_{2k}}(M^n,[g])>0$ is not preserved under finite covers, nor is either condition easy to check.
The most general geometric sufficient conditions for the existence of minimizers follow from the work of Case and Malchiodi~\cite{case-malchiodi}:

Let $m \in \bN_0$ be a nonnegative integer.
An \defn{$m$-special Einstein product ($m$-SEP)}~\cite{MR2574315} is a Riemannian product manifold $(M_1^m \times M_2^{n-m}, g_1 \oplus g_2)$ such that $\Ric_{g_1} = -(m-1)\lambda g_1$ and $\Ric_{g_2} = (n-m-1)\lambda g_2$ for some constant $\lambda > 0$.
Case and Malchiodi showed that if $n$ is sufficiently large relative to $k$ and $m$, then the Green's function of $P_{2k}$ is positive and
\begin{equation*}
    0 < Y_{Q_{2k}}(M_1^m \times M_2^{n-m}, [g_1 \oplus g_2]) < Y_{Q_{2k}}(S^n,[\grd]) .
\end{equation*}
However, the property of being conformal to an $m$-SEP is not preserved under finite connected coverings.
Instead, we say that $(X^n,g)$ is \defn{virtually} conformal to an $m$-SEP if there is a finite connected covering $\pi \colon \cX^n \to X^n$ such that $(\cX^n,\pi^\ast g)$ is conformal to an $m$-SEP.
This gives rise to a geometric Aubin set:

\begin{proposition}\label{case-malchiodi}
    Fix $k,m \in \bN$.
    There is an integer $N = N(k,m)$ such that if $n \geq N$, then
    \begin{equation*}
        \mA := \left\{ (X^n,g) \in \mM^n \suchthat \text{$(X^n,g)$ is virtually conformal to an $m$-SEP} \, \right\}
    \end{equation*}
    is a geometric Aubin set for $(Q_{2k},C^\infty(M))$.
\end{proposition}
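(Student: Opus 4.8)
The plan is to take $N = N(k,m)$ to be the integer produced by Case and Malchiodi~\cite{case-malchiodi}, so that for every $n \geq N$ each $n$-dimensional $m$-SEP has a positive Green's function for its GJMS operator $P_{2k}$ and satisfies $0 < Y_{Q_{2k}} < Y_{Q_{2k}}(S^n,[\grd])$. Since $Q_{2k}$ has rank $r=2$, here $U = C^\infty(M)$ and the operator associated to $Q_{2k}$ is a constant multiple of $P_{2k}$, so $Y_{Q_{2k}}(M^n,[g])$ is the usual linear Yamabe-type constant. It then suffices to verify the three conditions of Definition~\ref{defn:geometric-aubin-set}.

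The first thing I would prove is closure under finite connected coverings (condition~(iii)), which is exactly where the word \emph{virtually} is needed. Let $(X^n,g) \in \mA$, witnessed by a finite connected covering $\pi_0 \colon \cX \to X$ with $\pi_0^\ast g = e^{2\phi}h$ for an $m$-SEP metric $h$ on $\cX \cong M_1 \times M_2$, and let $\pi \colon \cM \to X$ be an arbitrary finite connected covering. Passing to a connected component $\cN$ of the fiber product $\cX \times_X \cM$ gives finite connected coverings $a \colon \cN \to \cX$ and $b \colon \cN \to \cM$ with $\pi_0 \circ a = \pi \circ b$, so that $b^\ast \pi^\ast g = a^\ast \pi_0^\ast g$ is conformal to $a^\ast h$. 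The covering $a$ corresponds to a finite-index subgroup $H \leq \pi_1(M_1) \times \pi_1(M_2)$, and the crucial point is that $H_i := H \cap \pi_1(M_i)$ is finite-index in $\pi_1(M_i)$ for $i=1,2$; this follows from the elementary inequality $[A : A \cap B] \leq [G : B]$. Hence $H_1 \times H_2$ is a finite-index product subgroup of $H$, and the corresponding further covering of $\cN$ is a Riemannian product $\cM_1 \times \cM_2$ of finite coverings of the Einstein factors $M_1, M_2$. As the Einstein condition is local and so survives pullback, this product carries an $m$-SEP metric conformal to the pullback of $\pi^\ast g$; therefore $(\cM, \pi^\ast g)$ is virtually conformal to an $m$-SEP and lies in $\mA$.

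Next I would transfer the hypotheses of Mazumdar's existence theorem~\cite{MR3542966} from $\cX$ down to $X$, using that $\pi_0$ is a Riemannian covering and $P_{2k}$ is natural. Positivity of $Y_{Q_{2k}}(X^n,[g])$ follows because $P_{2k}^{\cX}(\pi_0^\ast u) = \pi_0^\ast(P_{2k}^X u)$ gives $\int_{\cX} \pi_0^\ast u \, P_{2k}^{\cX}(\pi_0^\ast u) = (\deg \pi_0) \int_X u \, P_{2k}^X u$; since Case--Malchiodi together with the conformal invariance of $Y_{Q_{2k}}$ make $P_{2k}^{\cX}$ positive definite, so is $P_{2k}^X$, whence $Y_{Q_{2k}}(X^n,[g])>0$ and $P_{2k}^X$ is invertible. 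Positivity of the Green's function descends as well, since $G_X$ is a positive multiple of the sum of the strictly positive $G_{\cX}$ over the fibers of $\pi_0 \times \pi_0$. Finally, for the strict upper bound I would split into cases: if $\pi_0$ is trivial the bound $Y_{Q_{2k}}(X^n,[g]) < Y_{Q_{2k}}(S^n,[\grd])$ is immediate from Case--Malchiodi, and if $\pi_0$ is nontrivial the generalized Aubin Lemma of Appendix~\ref{app:aubin} gives $Y_{Q_{2k}}(X^n,[g]) < Y_{Q_{2k}}(\cX,[\pi_0^\ast g]) < Y_{Q_{2k}}(S^n,[\grd])$. In either case the inequality is strict, so condition~(i) holds with its equality clause vacuous, and Mazumdar's theorem applied to $(X^n,g)$ supplies the positive minimizer demanded by condition~(ii).

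I expect the main obstacle to be condition~(iii): a finite covering of a product need not itself be a product, so one cannot hope that $(\cM,\pi^\ast g)$ is conformal to an $m$-SEP on the nose, and the argument genuinely rests on the flexibility of the \emph{virtually} hypothesis together with the observation that every finite-index subgroup of $\pi_1(M_1) \times \pi_1(M_2)$ contains a finite-index product subgroup. By comparison, the analytic steps are routine once one invokes the naturality and conformal covariance of $P_{2k}$ and the cited existence and Aubin-type theorems.
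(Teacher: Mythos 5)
Your proposal is correct, and its analytic half (choice of $N$ from Case--Malchiodi, descent of the Green's function and of $0<Y_{Q_{2k}}<Y_{Q_{2k}}(S^n,[\grd])$ through the witnessing covering via Lemma~\ref{aubin-lemma}, then Mazumdar's existence theorem) matches the paper's proof step for step, including the observation that the degree-one case is handled directly by \cite{case-malchiodi}. Where you genuinely diverge is in the closure-under-coverings step. The paper first uses Bonnet--Myers (positive Ricci on $M_2$ forces $\pi_1(M_2)$ finite) to reduce to the case $\pi_1(M_1\times M_2)=\pi_1(M_1)$, then intersects $\pi_\ast(\pi_1(M_1))$ with $\varphi_\ast(\pi_1(\cX))$ and invokes the Lifting Criterion \cite{MR2766102} to produce the covering $\widehat{M}_1\times M_2\to\cX$. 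You instead pass to a connected component of the fiber product $\cX\times_X\cM$ and use the purely group-theoretic fact that any finite-index subgroup $H\leq\pi_1(M_1)\times\pi_1(M_2)$ contains the finite-index product subgroup $(H\cap\pi_1(M_1))\times(H\cap\pi_1(M_2))$; this avoids Myers' theorem entirely and treats the two factors symmetrically, at the cost of one extra covering in the tower. Both arguments are valid, and yours is arguably the more robust one (it would survive if the positive-Einstein factor were replaced by something with infinite fundamental group). Two minor remarks: for positivity of $Y_{Q_{2k}}(X^n,[g])$ the paper's direct scaling identity $\mI_{2k}^{\pi_0^\ast\hat g}=d^{2k/n}\mI_{2k}^{\hat g}$ gives $Y_{Q_{2k}}(X^n,[g])\geq d^{-2k/n}Y_{Q_{2k}}(\cX^n,[\pi_0^\ast g])>0$ in one line, whereas your detour through positive-definiteness of $P_{2k}$ requires an additional (standard, but unstated) G\aa rding--Sobolev step to return to the Yamabe-type constant; and your fiber-summation formula for the Green's function is a legitimate alternative to the paper's argument, which instead applies the strong maximum principle of \cite{case-malchiodi} to $\pi_0^\ast G_{2k}^g$.
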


\begin{proof}
    Case and Malchiodi showed~\cite{case-malchiodi}*{Theorem~1.3} that there is an $N(k,m) \in \bN$ such that if $(X^n,g)$ is conformal to an $m$-SEP, then $0 < Y_{Q_{2k}}(X^n,[g]) < Y_{Q_{2k}}(S^n,[\grd])$ and there is a $\hg \in [g]$ which minimizes the $Q_{2k}$-Yamabe constant $Y_{Q_{2k}}(X^n,[g])$.
    Moreover, $N(k,m) \geq 2k+2m-1$.
    Set $N := N(k,m)$.

    Suppose that $\pi \colon \cX^n \to X^n$ is a finite connected covering and that $(\cX^n,\cg)$, $\cg := \pi^\ast g$, is conformal to an $m$-SEP.
    Then the GJMS operator $P_{2k}^{\cg}$ satisfies the Strong Maximum Principle~\cite{case-malchiodi}*{Theorem~1.2}.
    In particular, $\ker P_{2k}^{\cg} = \{0\}$.
    Therefore, $P_{2k}^g$ has a trivial kernel, and so its Green's function $G_{2k}^g$ exists.
    Since $P_{2k}^{\cg}(\pi^\ast G_{2k}^g) \geq 0$, we deduce that $G_{2k}^g>0$.
    Let $d$ denote the degree of $\pi$.
    On the one hand, since $\mI_{2k}^{\pi^\ast\hat g} = d^{2k/n}\mI_{2k}^{\hat g}$ for every $\hat{g} \in [g]$, we see that
    \begin{equation*}
        Y_{Q_{2k}}(X^n,[g]) = \inf \left\{ \mI_{2k}^{\hat{g}} \suchthat \hat{g} \in [g] \right\} \geq d^{-2k/n} Y_{Q_{2k}}(\cX^n,[\cg]) > 0 .
    \end{equation*}
    On the other hand, since there is a positive minimizer of $Y_{Q_{2k}}(\cX^n,[\cg])$, we deduce from Lemma~\ref{aubin-lemma} below that
    \begin{equation*}
        Y_{Q_{2k}}(X^n,[g]) < Y_{Q_{2k}}(\cX^n,[\cg]) < Y_{Q_{2k}}(S^n,[\grd]) .
    \end{equation*}
    Hence there is a $\hg \in [g]$ which minimizes $Y_{Q_{2k}}(X^n,[g])$~\cite{MR3542966}*{Theorem~3}.

    Finally, suppose that $(X^n,g)$ is virtually conformal to an $m$-SEP and that $\varphi \colon \cX^n \to X^n$ is a finite connected covering.
    Let $(M_1^m \times M_2^{n-m}, g_1 \oplus g_2)$ be an $m$-SEP and let $\pi \colon M_1^m \times M_2^{n-m} \to X^n$ be a finite connected covering such that $g_1 \oplus g_2 \in [\pi^\ast g]$.
    Since $(M_2^{n-m},g_2)$ has positive Ricci curvature, $\pi_1(M_2)$ is finite.
    By passing to a finite cover, we can assume that $\pi_1(M_1 \times M_2) = \pi_1(M_1)$.
    Since $\pi$ and $\varphi$ are covering maps, we may regard $H := \pi_\ast\bigl( \pi_1(M_1) \bigr) \cap \varphi_\ast\bigl( \pi_1(\cX) \bigr)$ as a (necessarily finite index) subgroup of $\pi_1(M_1)$.
    Let $\widehat{\pi} \colon \widehat{M}_1 \to M_1$ be a finite connected covering with $\widehat{\pi}_\ast\bigl(\pi_1(\widehat{M}_1)\bigr) = H$.
    Then $(\widehat{M}_1^m \times M_2^{n-m} , \widehat{\pi}^\ast g_1 \oplus g_2)$ is an $m$-SEP.
    Since $\pi_\ast(\widehat{\pi} \times 1)_\ast(\pi_1(\widehat{M}_1 \times M_2))$ is a subgroup of $\varphi_\ast\bigl(\pi_1(\cX)\bigr)$, the Lifting Criterion~\cite{MR2766102}*{Theorem~11.18} implies that $\pi \circ (\widehat{\pi} \times 1) \colon \widehat{M}_1^m \times M_2^{n-m} \to X^n$ lifts to a finite connected covering $\widehat{\varphi} \colon \widehat{M}_1^m \times M_2^{n-m} \to \cX^n$.
    Therefore, $(\cX,\varphi^\ast g)$ is virtually conformal to an $m$-SEP.
\end{proof}

\begin{remark}
 Proposition~\ref{case-malchiodi} does not give the best expected geometric Aubin set.
 For example, Andrade, Piccione, and Wei~\cite{arXiv:2306.00679} conjectured that if $(M^n,g)$, $n \geq 7$, is a compact Riemannian manifold with $Y_{Q_2}(M^n,[g]),Y_{Q_4}(M^n,[g])>0$ and which admits a metric $\hg \in [g]$ with $Q_{6}^{\hg} \geq 0$ and $Q_6^{\hg} \not=0$, then there is a positive minimizer for $Y_{Q_6}(M^n,[g])$.
 If this is true, then the set of all compact Riemannian $n$-manifolds which admit a conformal representative with positive $Q_2$-, $Q_4$-, and $Q_6$-curvature is a geometric Aubin set.
\end{remark}

While it is known that $N(k,m) \geq 2k + 2m - 1$ in Proposition~\ref{case-malchiodi}, the minimal value of $N(k,m)$ is not known.
However, results of Qing and Raske~\cite{MR2219215} give the optimal value of $N$ for locally conformally flat manifolds which are virtually conformal to an $m$-SEP.

\begin{proposition}
    \label{qing-raske}
    Fix $k,m \in \bN$.
    Let $n \geq 2k + 2m - 1$ be an integer.
    Then
    \begin{equation*}
        \mA := \left\{ (X^n,g) \in \mM^n \suchthat \text{$W^g=0$ and $(X^n,g)$ is virtually conformal to an $m$-SEP} \, \right\}
    \end{equation*}
    is a geometric Aubin set for $(Q_{2k},C^\infty(M))$.
\end{proposition}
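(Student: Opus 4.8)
The plan is to reproduce the argument of Proposition~\ref{case-malchiodi} essentially verbatim, substituting the sharp locally conformally flat existence theory of Qing and Raske~\cite{MR2219215} for the results of Case and Malchiodi. The first observation is geometric. Because $W^g=0$ and the Weyl tensor is a conformal invariant, any $m$-SEP conformal to a manifold in $\mA$ is itself locally conformally flat; by the classification of conformally flat Einstein products, its factors must then be space forms of opposite curvature, so it is locally isometric to $H^m \times S^{n-m}$ (with the convention $H^1 = \bR$). As recalled in the discussion of Corollary~\ref{lcf-q-corollary}, $H^m \times S^{n-m}$ is conformal to $S^n \setminus S^{m-1}$~\cite{MR1139641}; hence such a manifold is a Kleinian quotient whose limit set $\Lambda$ is $S^{m-1}$, of Hausdorff dimension $m-1$. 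The elementary equivalence $m-1 < \tfrac{n-2k}{2} \iff n \geq 2k + 2m - 1$ shows that the hypothesis on $n$ is exactly the threshold $\dim_H \Lambda < \tfrac{n-2k}{2}$ at which Qing and Raske's analysis applies; moreover $n \geq 2k+2m-1 > 2m$ forces $\dim_H \Lambda < \tfrac{n-2}{2}$, so the positive Yamabe hypothesis needed to develop the universal cover into $S^n$ is automatic.

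First I would verify condition (iii) of Definition~\ref{defn:geometric-aubin-set}, that $\mA$ is closed under finite connected coverings. The condition $W^g=0$ is local and so passes to pullbacks, while the preservation of the property ``virtually conformal to an $m$-SEP'' is exactly the Lifting-Criterion argument~\cite{MR2766102}*{Theorem~11.18} carried out in the final paragraph of the proof of Proposition~\ref{case-malchiodi}, which I would reuse without change.

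Next, to establish conditions (i) and (ii) for a given $(X^n,g) \in \mA$, I would select a finite connected covering $\pi \colon \cX^n \to X^n$ with $\cg := \pi^\ast g$ conformal to an $m$-SEP. By the first paragraph this $m$-SEP is locally $H^m \times S^{n-m}$ with $\dim_H \Lambda = m-1 < \tfrac{n-2k}{2}$, so Qing and Raske's results furnish for $(\cX^n,\cg)$ a positive Green's function $G_{2k}^{\cg}$ for $P_{2k}$, the strict inequality $0 < Y_{Q_{2k}}(\cX^n,[\cg]) < Y_{Q_{2k}}(S^n,[\grd])$, and a positive minimizer of $Y_{Q_{2k}}(\cX^n,[\cg])$. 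When $\pi$ is trivial this already gives (i) and (ii); when $\pi$ is nontrivial the descent to $(X^n,g)$ is word-for-word as in Proposition~\ref{case-malchiodi}. Namely, $\ker P_{2k}^{\cg} = \{0\}$ forces $\ker P_{2k}^g = \{0\}$, so $G_{2k}^g$ exists, and $P_{2k}^{\cg}(\pi^\ast G_{2k}^g) \geq 0$ together with the maximum principle upstairs yields $G_{2k}^g > 0$; the covering inequality $Y_{Q_{2k}}(X^n,[g]) \geq d^{-2k/n} Y_{Q_{2k}}(\cX^n,[\cg]) > 0$, $d = \deg\pi$, gives positivity; the Aubin Lemma (Lemma~\ref{aubin-lemma}) gives $Y_{Q_{2k}}(X^n,[g]) < Y_{Q_{2k}}(\cX^n,[\cg]) < Y_{Q_{2k}}(S^n,[\grd])$ using the positive minimizer upstairs; and Mazumdar's theorem~\cite{MR3542966}*{Theorem~3} then produces a positive minimizer on $X$. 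Finally, since every manifold in $\mA$ is finitely covered by a product carrying an infinite fundamental group (a cocompact hyperbolic lattice when $m \geq 2$, a circle factor when $m=1$), it is never conformal to $(S^n,\grd)$; as the displayed inequality is strict in every case, the equality clause of (i) holds vacuously.

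The covering-closure and descent steps are inherited unchanged from Proposition~\ref{case-malchiodi}, so the only real work is the first paragraph together with the extraction from~\cite{MR2219215} of the precise package used above: positivity of the Green's function of $P_{2k}$, the strict bound $Y_{Q_{2k}} < Y_{Q_{2k}}(S^n,[\grd])$, and a positive minimizer, each valid under the sharp condition $\dim_H\Lambda < \tfrac{n-2k}{2}$. Confirming that Qing and Raske's arguments deliver all three ingredients at this optimal threshold--- rather than only a positive solution of the constant $Q_{2k}$-curvature equation---is the main point, and is precisely what upgrades the non-explicit constant $N(k,m)$ of Proposition~\ref{case-malchiodi} to the value $2k+2m-1$.
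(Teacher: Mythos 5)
Your proposal is correct and follows essentially the same route as the paper: establish the sharp bounds $0 < Y_{Q_{2k}} < Y_{Q_{2k}}(S^n,[\grd])$ and the existence of a positive minimizer on the locally conformally flat $m$-SEP cover, then reuse the covering-closure and descent arguments of Proposition~\ref{case-malchiodi} verbatim. The only differences are cosmetic: the paper handles $k=1$ separately via the Yamabe discussion and cites the Case--Malchiodi restatement of the locally conformally flat estimates rather than extracting the limit-set threshold $\dim_H\Lambda = m-1 < \tfrac{n-2k}{2}$ directly from Qing--Raske as you do.
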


\begin{proof}
    The case $k=1$ follows from the discussion of Subsection~\ref{sec:yamabe}.

    Suppose now that $k \geq 2$.
    Let $(X^n,g) \in \mA$ is conformal to an $m$-SEP.
    Then $n \geq 5$ and $(X^n,g)$ is locally conformally flat.
    Since $(X^n,g)$ is conformal to an $m$-SEP, we see that $0 < Y_{Q_{2k}}(X^n,[g]) < Y_{Q_{2k}}(S^n,[\grd])$~\cite{case-malchiodi}*{Theorem~1.1 and Lemma~4.3}.
    Hence there is a minimizer $\hg \in [g]$ for $Y_{Q_{2k}}(X^n,[g])$~\cite{MR3542966}*{Theorem 3}.
    The rest of the argument follows as in the proof of Proposition~\ref{case-malchiodi}.
\end{proof}

\section{Proofs of our main results}
\label{sec:proof}

Let $G$ be a group with identity $e_G$.
We say that $G$ is \defn{residually finite} if for every $g \in G$ with $g \not= e_G$, there is a finite group $H$ and a group homomorphism $\pi \colon G \to H$ such that $\pi(g) \not= e_H$.
The \defn{profinite completion} of a group $G$ is the limit
\begin{equation*}
 \hG := \varprojlim G / N ,
\end{equation*}
where $N$ ranges over all finite index normal subgroups of $G$.
It readily follows that if $G$ is infinite and residually finite, then its profinite completion is infinite.
In particular, the Selberg--Malcev Lemma~\cite{MR1299730}*{Section~7.6} implies that the fundamental group of a compact symmetric space of noncompact type---for example, a compact hyperbolic manifold---has infinite profinite completion.
Additional examples have been discussed by Bettiol and Piccione~\cite{MR3803113}*{Section~3.2}.

The basic idea of the proof of Theorem~\ref{main-thm} is to inductively apply the following topological observation of Bettiol and Piccione~\cite{MR3803113}*{Lemma~3.6} to construct a tower of finite regular coverings for which minimizers of the $(I,U)$-Yamabe constant must be nonhomothetic.

\begin{lemma}
 \label{bettiol-piccione}
 Let $(M^n,g)$ be a compact Riemannian manifold for which $\pi_1(M)$ has infinite profinite completion.
 Given $V \in \bR$, there exists a finite regular covering $\pi \colon \cM^n \to M^n$ such that $\Vol_{\pi^\ast g}(\cM) > V$.
\end{lemma}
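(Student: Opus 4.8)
The plan is to reduce this geometric statement to a purely group-theoretic one about $\pi_1(M)$, using the fact that volume is multiplicative under coverings. First I would record the elementary observation that a finite connected covering scales volume by its degree: if $\pi \colon \cM^n \to M^n$ is a connected covering of degree $d$, then $\pi$ is a local isometry with respect to the pullback metric $\pi^\ast g$, and covering $M$ by evenly covered neighborhoods and integrating shows $\Vol_{\pi^\ast g}(\cM) = d \cdot \Vol_g(M)$. Since $M$ is closed, $\Vol_g(M) > 0$, so given $V \in \bR$ it suffices to produce a finite regular covering whose degree $d$ exceeds $V/\Vol_g(M)$. In other words, it is enough to show that $M$ admits finite regular coverings of arbitrarily large degree.

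Next I would translate this into group theory. By covering space theory, finite regular (normal) connected coverings of $M$ correspond to finite-index normal subgroups $N \trianglelefteq \pi_1(M)$, the degree of the associated covering being $[\pi_1(M):N] = \lv \pi_1(M)/N \rv$. Thus the lemma reduces to the claim that if $G := \pi_1(M)$ has infinite profinite completion, then the orders $\lv G/N \rv$, as $N$ ranges over finite-index normal subgroups, are unbounded.

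The main obstacle—indeed the only point of substance—is this last claim, which I would establish by contraposition. Suppose the orders $\lv G/N \rv$ are bounded, and choose a finite-index normal subgroup $N_0$ with $\lv G/N_0 \rv$ maximal. For any finite-index normal $N$, the intersection $N_0 \cap N$ is again finite-index and normal, and the surjection $G/(N_0 \cap N) \twoheadrightarrow G/N_0$ combined with the maximality of $\lv G/N_0 \rv$ forces $N_0 \cap N = N_0$, that is, $N_0 \subseteq N$. Hence $N_0$ is contained in every finite-index normal subgroup, so it is the smallest element of the directed system defining $\hG = \varprojlim G/N$; the inverse limit therefore collapses to $G/N_0$, which is finite. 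This is exactly the contrapositive of the desired claim.

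Finally, assembling the pieces: the infinite profinite completion of $\pi_1(M)$ yields finite-index normal subgroups of arbitrarily large index, hence finite regular coverings of arbitrarily large degree, hence of arbitrarily large volume. Choosing such a covering of degree $d > V/\Vol_g(M)$ and applying the volume-scaling identity completes the argument.
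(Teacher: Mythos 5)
Your proof is correct and complete. The paper does not actually prove Lemma~\ref{bettiol-piccione}; it quotes it as Lemma~3.6 of Bettiol--Piccione, and your argument --- volume is multiplicative under finite coverings, so it suffices to find finite-index normal subgroups of $\pi_1(M)$ of arbitrarily large index, which you obtain by showing that a maximal-index finite-index normal subgroup $N_0$ would be contained in every other one and hence force $\hG \cong G/N_0$ to be finite --- is exactly the standard reasoning behind that citation, with all details correctly supplied.
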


Lifting these metrics to the conformal universal cover produces infinitely many representatives with constant $I$-curvature for which no two conformal factors, written in the form of Equation~\eqref{equivalence}, are constant multiples of one another.
The following result uses the Ferrand--Obata Theorem~\cites{MR1334876,MR1371767} on noncompact manifolds to conclude that these representatives are generically nonhomothetic;
this argument also appears in~\cite{ACPW} for the special case when $I$ is the scalar curvature.

\begin{lemma}
    \label{construction-lemma}
    Let $I$ be a CVI of weight $-2k$ and rank $r$, and let $U$ be a geometric cone.
    Suppose that there is a nonempty geometric Aubin set $\mA$ for $(I,U)$.
    Let $(M^n,g) \in \mA$ be such that there is a sequence $\{\pi_j \colon M_j^n \to M^n\}_{j \in \bN}$ of finite connected coverings of degree $m_j \geq j$ and a sequence $\{g_j\}_{j \in \bN}$ of minimizers of $Y_{(I,U)}^+(M_j^n,[\pi_j^\ast g])$ such that for each $j \in \bN$, there is a diffeomorphism $\Phi_j \in \Diff(\cM)$ and a constant $c_j > 0$ such that $\Phi_j^\ast \cpi^\ast g = c_j^2 \cpi_j^\ast g_j$, where $\cpi \colon \cM^n \to M^n$ and $\cpi_j \colon \cM^n \to M_j^n$ are the universal covers of $M^n$ and $M_j^n$, respectively.
    Then $(\cM^n,\cpi^\ast g)$ is conformally equivalent to flat Euclidean space.
\end{lemma}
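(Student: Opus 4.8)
The plan is to argue by contraposition, using the Ferrand--Obata theorem~\cites{MR1334876,MR1371767} as the geometric input and a covolume comparison as the quantitative engine. So I assume that $(\cM^n,\cpi^\ast g)$ is \emph{not} conformally equivalent to $\bR^n$ and seek a contradiction with $m_j\to\infty$. Write $\Gamma:=\pi_1(M)$ and $\Gamma_j:=\pi_1(M_j)$, realized as the deck groups of $\cpi$ and $\cpi_j$ acting on $\cM$ by $\cpi^\ast g$-isometries, so $[\Gamma:\Gamma_j]=m_j$. The first observation is that each $\Phi_j$ is a \emph{conformal} transformation of $(\cM,\cpi^\ast g)$, because $\Phi_j^\ast(\cpi^\ast g)=c_j^2\,\cpi_j^\ast g_j$ is a positive multiple of $\cpi^\ast g$ (as $g_j\in[\pi_j^\ast g]$). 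Since $\cpi_j^\ast g_j$ is $\Gamma_j$-invariant, the conjugate $\Gamma_j':=\Phi_j\Gamma_j\Phi_j^{-1}$ consists of $\cpi^\ast g$-isometries and $\Phi_j$ descends to an isometry $(M_j,c_j^2 g_j)\to(\cM/\Gamma_j',\cpi^\ast g)$; in particular $\Vol_{\cpi^\ast g}(\cM/\Gamma_j')=c_j^n\Vol_{g_j}(M_j)=c_j^n$.

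Next I would extract the two consequences of $(M,g)\in\mA$. As $\mA$ is closed under finite connected coverings, each $(M_j,\pi_j^\ast g)\in\mA$, so the positive minimizer $g_j$ has constant $I$-curvature $\kappa_j=Y_{(I,U)}(M_j,[\pi_j^\ast g])$ with $0<\kappa_j\le Y_{(I,U)}(S^n,[\grd])$ by property~(i). Transporting constancy through the homothety, $(\cM/\Gamma_j',\cpi^\ast g)$ has constant $I$-curvature $c_j^{-2k}\kappa_j$; but its $I$-curvature is the descent of $I^{\cpi^\ast g}=\cpi^\ast(I^g)$, so constancy forces $\cpi^\ast(I^g)$, and hence $I^g$, to be a positive constant $\iota$ satisfying $\kappa_j=c_j^{2k}\iota$. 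Together with the upper bound on $\kappa_j$ this yields uniform control on the scaling factors: $c_j\le c_+:=\bigl(Y_{(I,U)}(S^n,[\grd])/\iota\bigr)^{1/(2k)}$.

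The heart of the argument is then a covolume comparison. Since $m_j\to\infty$ forces $\pi_1(M)$ to be infinite, $\cM$ is noncompact; as it is also assumed not conformally $\bR^n$, the Ferrand--Obata theorem makes its conformal structure inessential, providing a metric $h\in[\cpi^\ast g]$ with $\Conf(\cM,\cpi^\ast g)=\mathrm{Isom}(\cM,h)$. Writing $h=e^{2f}\cpi^\ast g$ and using $\Gamma\le\mathrm{Isom}(\cM,\cpi^\ast g)\le\Conf(\cM,\cpi^\ast g)=\mathrm{Isom}(\cM,h)$, the factor $f$ is $\Gamma$-invariant, hence bounded, say $|f|\le C$. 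On the one hand, comparability of $h$ and $\cpi^\ast g$ gives
\begin{equation*}
 \Vol_h(\cM/\Gamma_j')\le e^{nC}\Vol_{\cpi^\ast g}(\cM/\Gamma_j')=e^{nC}c_j^n\le e^{nC}c_+^n .
\end{equation*}
On the other hand each $\Phi_j\in\Conf(\cM,\cpi^\ast g)=\mathrm{Isom}(\cM,h)$ is an $h$-isometry, so conjugation by $\Phi_j$ preserves $h$-covolume, and $\Gamma_j$ has index $m_j$ in $\Gamma$, whence
\begin{equation*}
 \Vol_h(\cM/\Gamma_j')=\Vol_h(\cM/\Gamma_j)=m_j\,\Vol_h(\cM/\Gamma)\longrightarrow\infty .
\end{equation*}
The two displays are incompatible for large $j$, the desired contradiction; therefore $(\cM,\cpi^\ast g)$ is conformally equivalent to flat Euclidean space.

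The step I expect to be the main obstacle is the uniform bound $c_j\le c_+$ in the second paragraph, since this is precisely where the variational (geometric Aubin set) hypotheses must be used: without the ceiling $\kappa_j\le Y_{(I,U)}(S^n,[\grd])$ and the rigidity forcing $I^g$ to be constant, the $h$-covolumes of $\Gamma_j'$ could grow and no contradiction would arise. A secondary point requiring care is the correct invocation of Ferrand--Obata on the noncompact $\cM$ together with the verification that the resulting invariant metric $h$ is $\Gamma$-invariant, and thus uniformly comparable to $\cpi^\ast g$; this comparability is exactly what converts the abstract inessentiality into the quantitative volume estimate above.
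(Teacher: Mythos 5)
Your proof is correct, and it takes a genuinely different route from the paper's. The paper argues directly toward non-properness: after normalizing $g$ and the minimizers $g_j$ to unit volume and constant $I$-curvature, it fixes a fundamental domain $F$ for $\cpi$, composes each $\Phi_j$ with deck transformations to get conformal maps $\Psi_j$ with $\Psi_j(F)\cap F\neq\emptyset$, and computes the normalized total $I$-curvature of $F$ in $\Psi_j^\ast\cpi^\ast g$ in two ways; the ceiling $0<Y_{(I,U)}(M_j^n,[\pi_j^\ast g])\leq Y_{(I,U)}(S^n,[\grd])$ together with $\Vol_{\tau_j^\ast\cpi_j^\ast g_j}(F)\leq m_j^{-1}$ forces $\Vol_{\cpi^\ast g}(\Psi_j(F))\to 0$, so no subsequence of $\{\Psi_j\}$ converges, the conformal group is non-proper, and Ferrand--Obata applies. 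You instead argue by contraposition, using Ferrand--Obata in its ``essential group'' formulation to produce a $\Conf$-invariant metric $h\in[\cpi^\ast g]$ with $\Gamma$-invariant (hence bounded) conformal factor, and then run a covolume comparison: $m_j\Vol_h(\cM/\Gamma)=\Vol_h(\cM/\Gamma_j')\leq e^{nC}c_j^n\leq e^{nC}c_+^n$. Both proofs rest on the same two inputs (the Aubin-set ceiling plus positivity, and Ferrand--Obata), but yours uses the slightly stronger form of the latter---that a noncompact manifold not conformal to $\bR^n$ has inessential conformal group, i.e.\ that properness yields an invariant metric \emph{in the conformal class}---whereas the paper only needs the ``non-proper implies conformally flat'' direction; this stronger form is part of Ferrand's theorem, so the citation is legitimate. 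In exchange your argument replaces the fundamental-domain bookkeeping with a clean isometric covolume count, and it extracts the nice extra observation that the homothety hypothesis already forces $I^g$ to be a positive constant (the paper instead arranges this by a WLOG normalization). Two points worth making explicit: the identity $\Vol_{\cpi^\ast g}(\cM/\Gamma_j')=c_j^n$ uses the unit-volume normalization $\Vol_{g_j}(M_j)=1$ of the minimizers (harmless, also assumed in the paper; without it the same uniform bound holds for $c_j^n\Vol_{g_j}(M_j)$), and the constancy of $I^{g_j}$ is the Euler--Lagrange equation for minimizers of the $(I,U)$-Yamabe quotient, which should be stated rather than left implicit.
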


\begin{proof}
    Since $(M^n,g) \in \mA$ and the statement of the lemma depends only on $[g]$, we may assume without loss of generality that $I^g = Y_{(I,U)}(M^n,[g])$ and $\Vol_g(M) = 1$.
    Since $m_j \to \infty$ as $j \to \infty$, we see that $\cM$ is noncompact.
    
    Fix $j \in \bN$.
    Observe that $\cpi = \pi_j \circ \cpi_j$.
    Let $\Aut(\cpi)$ and $\Aut(\cpi_j)$ denote the groups of deck transformations for $\cpi$ and $\cpi_j$, respectively.
    Let $F$ be a fundamental domain for $\cpi$.
    On the one hand, since $\Aut(\cpi_j)$ is a finite index subgroup of $\Aut(\cpi)$, there is a $\tau_j \in \Aut(\cpi)$ such that
    \begin{equation*}
        \Vol_{\tau_j^\ast \cpi_j^\ast g_j}(F) = \min \left\{ \Vol_{\sigma^\ast \cpi_j^\ast g_j}(F) \suchthat \sigma \in \Aut(\cpi) \right\} .
    \end{equation*}
    Therefore $\Vol_{\tau_j^\ast \cpi_j^\ast g_j}(F) \leq m_j^{-1}$.
    On the other hand, we may pick a $\sigma_j \in \Aut(\cpi)$ such that $\Psi_j := \sigma_j \circ \Phi_j \circ \tau_j$ satisfies $\Psi_j(F) \cap F \not= \emptyset$.
    Then $\Psi_j^{-1}(F) \cap F \not= \emptyset$.
    Suppose to the contrary that $(\cM,\cpi^\ast g)$ is not conformally equivalent to Euclidean space.
    The Ferrand Obata--Theorem~\cite{MR1371767}*{Theorem~A${}_1$} yields a $\Psi \in \Conf(\cM,\cpi^\ast g)$ such that $\Psi_j \to \Psi$ in the compact-open topology.

    We now derive a contradiction by computing
    \begin{equation*}
        \mI_j := \frac{\int_F I^{\Psi_j^\ast\cpi^\ast g} \dvol_{\Psi_j^\ast \cpi^\ast g}}{\Vol_{\Psi_j^\ast\cpi^\ast g}(F)^{\frac{n-2k}{n}}}
    \end{equation*}
    in two ways.
    First, since $I^g = Y_{(I,U)}(M^n,[g])$, we directly compute that
    \begin{equation}
        \label{eqn:mI-via-Psi}
        \mI_j = Y_{(I,U)}(M^n,[g])\Vol_{\cpi^\ast g}(\Psi_j(F))^{\frac{2k}{n}} .
    \end{equation}
    Second, note that $\Psi_j^\ast\cpi^\ast g = c_j^2 \tau_j^\ast \cpi_j^\ast g_j$.
    Since $I$ is homogeneous of degree $-2k$ with respect to constant rescalings and since $I^{g_j}=Y_{(I,U)}(M_j^n,[\pi_j^\ast g])$, we compute that
    \begin{equation*}
        \mI_j = \frac{\int_F I^{\tau_j^\ast\cpi_j^\ast g_j} \dvol_{\tau_j^\ast\cpi_j^\ast g_j}}{\Vol_{\tau_j^\ast\cpi_j^\ast g_j}(F)^{\frac{n-2k}{n}}} = Y_{(I,U)}(M_j^n,[\pi_j^\ast g])\Vol_{\tau_j^\ast\cpi_j^\ast g_j}(F)^{\frac{2k}{n}} .
    \end{equation*}
    Our choice of $\tau_j$ and the nonnegativity of $Y_{(I,U)}(M_j^n,[\pi_j^\ast g])$ imply that
    \begin{equation*}
        \mI_j \leq Y_{(I,U)}(M_j^n,[\pi_j^\ast g])m_j^{-\frac{2k}{n}} .
    \end{equation*}
    Therefore $\limsup_{j\to\infty} \mI_j \leq 0$.
    Since $Y_{(I,U)}(M^n,[g])>0$, we deduce from Equation~\eqref{eqn:mI-via-Psi} that $\Vol(\cpi^\ast g)(\Psi_j(F)) \to 0$ as $j \to \infty$, a contradiction.
\end{proof}

Lemmas~\ref{bettiol-piccione} and~\ref{construction-lemma} allow us to adapt an argument of Bettiol and Piccione~\cite{MR3803113}*{Theorem~1.3} to prove the following generalization of Theorems~\ref{main-thm} and~\ref{infinite-main-thm}.

\begin{theorem}
    \label{precise-main-thm}
    Let $I$ be a CVI of weight $-2k$ and rank $r$, and let $U$ be a geometric cone.
    Suppose that there is a nonempty geometric Aubin set $\mA$ for $(I,U)$.
    Let $(M^n,g) \in \mA$ be such that $\pi_1(M)$ has infinite profinite completion.
    Then there is an infinite tower
    \begin{equation*}
        \dotsm \overset{\pi_{k+1}}{\longrightarrow} M_k^n \overset{\pi_{k}}{\longrightarrow} \dotsm \overset{\pi_3}{\longrightarrow} M_2^n \overset{\pi_2}{\longrightarrow} M_1^n \overset{\pi_1}{\longrightarrow} M_0^n := M^n
    \end{equation*}
    of finite regular coverings and a sequence $(g_j)_{j=0}^\infty$ of minimizers of $Y_{(I,U)}^+(M_j^n,[\Pi_j^\ast g])$ such that for each integer $j \geq 0$, the set $\{ (\Pi_j^\ell)^\ast g_\ell \}_{\ell=0}^j$ consists of pairwise nonhomothetic representatives of $[\Pi_j^\ast g]$ with constant $I$-curvature, where $\Pi_j^\ell := \pi_{\ell+1} \circ \dotsm \circ \pi_j \colon M_j^n \to M_\ell^n$ and $\Pi_j := \Pi_j^0 \colon M_j^n \to M^n$ are defined for all $j \geq \ell$, with the convention $\Pi_j^j = \Id$.
    Moreover, if $(\cM^n,\cpi^\ast g)$ is not conformally equivalent to $(\bR^n,\mathrm{d}x^2)$, then, after passing to a subtower if necessary, the metrics $\{ \cpi_j^\ast g_j \}_{j=0}^\infty$ are pairwise nonhomothetic, where $\cpi \colon \cM^n \to M^n$ and $\cpi_j \colon \cM^n \to M_j^n$ are the universal covers of $M^n$ and $M_j^n$, respectively.
\end{theorem}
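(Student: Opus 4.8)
The plan is to build the tower one level at a time, choosing each new finite regular covering of sufficiently large degree that the Yamabe quotients of the associated minimizers separate enough to force nonhomothety, and then to produce the infinite family by contraposing Lemma~\ref{construction-lemma}.

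First I would record what a geometric Aubin set provides. Set $C:=Y_{(I,U)}(S^n,[\grd])$. For any finite connected covering $M'\to M$ one has $(M',\pi^\ast g)\in\mA$ by property~(iii) of Definition~\ref{defn:geometric-aubin-set}, hence $0<Y_{(I,U)}(M',[\pi^\ast g])\le C$ by property~(i), and a positive minimizer exists by property~(ii). Normalizing such a minimizer $h$ to unit volume, its Euler--Lagrange equation together with the transformation rule~\eqref{equivalence} shows that $I^h$ is the constant $Y_{(I,U)}(M',[\pi^\ast g])$, so by~\eqref{yamabequotient} its Yamabe quotient equals $Y_{(I,U)}(M',[\pi^\ast g])$. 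Abbreviate $Y_\ell:=Y_{(I,U)}(M_\ell,[\Pi_\ell^\ast g])$ and $d_j^\ell:=\deg\Pi_j^\ell$. I would build the tower from a descending chain of finite-index subgroups normal in $\pi_1(M)$, so that every $\Pi_j^\ell$, and every subtower, consists of regular coverings; this is possible because $\pi_1(M)$, and hence each of its finite-index subgroups, has infinite profinite completion, and Lemma~\ref{bettiol-piccione} lets us make each new index as large as we please. Let $g_j$ be a unit-volume positive minimizer for $Y_j$; then $I^{g_j}\equiv Y_j$, and $(\Pi_j^\ell)^\ast g_\ell\in[\Pi_j^\ast g]$ has constant $I$-curvature $Y_\ell$ by naturality of $I$.

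The heart of the finite-level claim is that the Yamabe quotient is a homothety invariant that scales under coverings as $\mI_{2k}^{\pi^\ast h}=(\deg\pi)^{2k/n}\mI_{2k}^h$. Thus, for $\ell_1<\ell_2\le j$, if $(\Pi_j^{\ell_1})^\ast g_{\ell_1}$ and $(\Pi_j^{\ell_2})^\ast g_{\ell_2}$ were homothetic, then equating their Yamabe quotients $(d_j^{\ell_1})^{2k/n}Y_{\ell_1}=(d_j^{\ell_2})^{2k/n}Y_{\ell_2}$ and using $d_j^{\ell_1}=d_{\ell_2}^{\ell_1}d_j^{\ell_2}$ would give $Y_{\ell_2}=(d_{\ell_2}^{\ell_1})^{2k/n}Y_{\ell_1}$. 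I would preclude this at the moment level $\ell_2$ is built: the values $Y_{\ell_1}$ for $\ell_1<\ell_2$ are already determined and positive, so I choose $\deg\pi_{\ell_2}$ large enough that $(\deg\pi_{\ell_2})^{2k/n}\min_{\ell_1<\ell_2}Y_{\ell_1}>C$. Since $d_{\ell_2}^{\ell_1}\ge\deg\pi_{\ell_2}$ and $Y_{\ell_2}\le C$, this yields $(d_{\ell_2}^{\ell_1})^{2k/n}Y_{\ell_1}>Y_{\ell_2}$ for every $\ell_1<\ell_2$, contradicting the displayed equality. Hence the metrics $\{(\Pi_j^\ell)^\ast g_\ell\}_{\ell=0}^j$ are pairwise nonhomothetic at every level $j$.

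For the infinite family I would argue by contraposition using Lemma~\ref{construction-lemma}. Suppose $(\cM,\cpi^\ast g)$ is not conformal to $(\bR^n,\mathrm{d}x^2)$ but $\{\cpi_j^\ast g_j\}_{j\ge0}$ admits no infinite pairwise-nonhomothetic subfamily. Then these metrics fall into finitely many homothety classes, so by pigeonhole there is an infinite $S\subseteq\bN$ with $\cpi_j^\ast g_j$ homothetic to $\cpi_{j_0}^\ast g_{j_0}$ for all $j\in S$, where $j_0:=\min S$. For $j\in S$ the map $\Pi_j^{j_0}\colon M_j\to M_{j_0}$ is a finite connected covering of degree $d_j^{j_0}\to\infty$, and $g_j$ minimizes $Y_{(I,U)}(M_j,[(\Pi_j^{j_0})^\ast g_{j_0}])=Y_j$. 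Applying Lemma~\ref{construction-lemma} to $(M_{j_0},\Pi_{j_0}^\ast g)\in\mA$ with distinguished conformal representative $g_{j_0}$---legitimate since the statement of that lemma depends only on the conformal class---forces $(\cM,\cpi_{j_0}^\ast g_{j_0})$, and therefore $(\cM,\cpi^\ast g)$, to be conformal to $(\bR^n,\mathrm{d}x^2)$, a contradiction. Thus an infinite pairwise-nonhomothetic subfamily exists, and passing to the corresponding subtower---still a tower of regular coverings, and still satisfying the finite-level conclusion, which concerns only pairs of levels---completes the proof. The step I expect to be most delicate is not any single estimate but the coordination of two bookkeeping tasks: realizing the inductive degree thresholds by finite regular coverings while ensuring the profinite completion stays infinite along the whole tower, and verifying in the re-basing step that the hypotheses of Lemma~\ref{construction-lemma} transfer verbatim to the new base. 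The genuinely hard analytic input, namely the Ferrand--Obata dichotomy underlying Lemma~\ref{construction-lemma}, has already been isolated there.
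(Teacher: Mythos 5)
Your proposal is correct and follows essentially the same route as the paper: choose each covering in the tower via Lemma~\ref{bettiol-piccione} with a degree threshold calibrated against $Y_{(I,U)}(S^n,[\grd])$, use the scaling $\mI_{2k}^{\pi^\ast h}=(\deg\pi)^{2k/n}\mI_{2k}^{h}$ of the homothety-invariant Yamabe quotient to rule out homotheties at each finite level, and invoke Lemma~\ref{construction-lemma} for the infinite family. The only differences are presentational --- you derive nonhomothety by contradiction on arbitrary pairs where the paper exhibits a strictly decreasing chain of quotients, and you make explicit the pigeonhole and re-basing step that the paper compresses into ``the conclusion readily follows.''
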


\begin{proof}
 To simplify notation, we denote $Y := Y_{(I,U)}(M^n,[g])$.
 Similarly, given a finite connected covering $\Pi_j \colon M_j^n \to M^n$, we denote $Y_j := Y_{(I,U)}(M_j^n,[\Pi^\ast g])$.
 Since $\mA$ is a geometric Aubin set, $Y_j = Y_{(I,U)}^+(M_j^n,[\Pi^\ast g])>0$.

 Suppose that a finite regular covering $\Pi_j \colon M_j^n \to M^n$ is given.
 Since $\mA$ is a geometric Aubin set, we can pick a minimizer $g_j \in [\Pi_j^\ast g]$ of $Y_j$.
 Let $V_j > 0$ be such that
 \begin{equation}
     \label{eqn:pick-Vj}
     Y_jV_j^{\frac{2k}{n}} > Y_{(I,U)}(S^n,[\grd]) .
 \end{equation}
 Choose, using Lemma~\ref{bettiol-piccione}, a finite regular covering $\pi_{j+1} \colon M_{j+1}^n \to M_j^n$ such that
 \begin{equation*}
     \Vol_{\pi_{j+1}^\ast g_j}(M_{j+1}^n) > V_j .
 \end{equation*}
 Direct computation yields
 \begin{equation}
    \label{eqn:construction-inequality}
     \mI^{\pi_{j+1}^\ast g_j} = Y_j \Vol_{\pi_{j+1}^\ast g_j}(M_{j+1}^n)^{\frac{2k}{n}} > Y_{(I,U)}(S^n,[g_{\mathrm{rd}}]) \geq \mI^{g_j} .
 \end{equation}
 Denote $\Pi_{j+1} := \Pi_j \circ \pi_{j+1} \colon M_j^n \to M^n$.
 Then $\pi_{j+1}^\ast g_j$ is not a minimizer of $Y_{j+1}$.

 The above construction yields an infinite tower $\{ \pi_j \colon M_j^n \to M_{j-1}^n \}_{j \in \bN}$ of finite regular coverings and a sequence $\{ g_j \}_{j \in \bN_0}$ of minimizers of $Y_j$.
 Moreover, Inequality~\eqref{eqn:construction-inequality} implies that
 \begin{equation*}
     \mI^{(\Pi_j^{\ell-1})^\ast g_{\ell-1}} = \mI^{(\Pi_j^\ell)^\ast \pi_\ell^\ast g_{\ell-1}} = \mI^{\pi_\ell^\ast g_{\ell-1}}\left( \deg \Pi_j^\ell \right)^{\frac{2k}{n}} > \mI^{g_\ell} \left( \deg \Pi_j^\ell \right)^{\frac{2k}{n}} = \mI^{(\Pi_j^\ell)^\ast g_\ell}
 \end{equation*}
 for each $\ell \in \bN$.
 In particular, $\mI^{g_j} < \mI^{(\Pi_j^{j-1})^\ast g_{j-1}} < \dotsm < \mI^{(\Pi_j^0)^\ast g_0}$.
 The scale and diffeomorphism invariance of the total $I$-curvature implies that these metrics are pairwise nonhomothetic.

 Finally, suppose that $(\cM^n,\cpi^\ast g)$ is not conformally equivalent to Euclidean space.
 Lemma~\ref{construction-lemma} implies that no subsequence of $\{\cpi_j^\ast g_j \}_{j=0}^\infty$ consists of pairwise homothetic metrics.
 The conclusion readily follows.
\end{proof}

The existence of compact Riemannian manifolds admitting many nonhomothetic metrics of constant $Q_{2k}$-curvature in their conformal class follows from work of Qing and Raske~\cite{MR2219215} and of Case and Malchiodi~\cite{case-malchiodi}.

\begin{proof}[Proof of Theorem~\ref{q-corollary}]
 Let $U$ be the geometric cone of all smooth functions.
 Let $N$ and $\mA$ be as in Proposition~\ref{case-malchiodi}.
 Then $(M_1^m \times M_2^{n-m}, g_1 \oplus g_2) \in \mA$ and $\pi_1(M_1 \times M_2)$ has infinite profinite completion.
 The conclusions now follow from Theorems~\ref{main-thm} and~\ref{infinite-main-thm}.
\end{proof}

\begin{proof}[Proof of Theorem~\ref{lcf-q-corollary}]
 Let $U$ be the geometric cone of all smooth functions.
 Let $\mA$ be as in Proposition~\ref{qing-raske}.
 Then $(M^m \times S^{n-m}, \ghyp \oplus \grd) \in \mA$ and $\pi_1(M)$ has infinite profinite completion.
 The conclusions now follow from Theorem~\ref{main-thm} and~\ref{infinite-main-thm}.
\end{proof}

The existence of compact Riemannian manifolds admitting many nonhomothetic metrics of constant $v_{k}$-curvature in their conformal class follows from the solution~\cites{MR2362323,MR2072215} of the $\sigma_k$-Yamabe Problem.

\begin{proof}[Proof of Theorem~\ref{vk-corollary}]
 Fix $k,m \in \bN$.
 Given $n \in \bN$, the $k$-th elementary symmetric function of the eigenvalues of the block diagonal matrix $A = -I_m \oplus I_{n-m}$, where $I_m$ and $I_{n-m}$ are the $m \times m$ and $(n-m) \times (n-m)$ identity matrices, respectively, is
 \begin{equation}
  \label{eqn:sigmak-formula}
  \sigma_k(A) = \sum_{j=0}^k (-1)^j\binom{m}{j}\binom{n-m}{k-j} .
 \end{equation}
 In particular, $\sigma_k(A) = n^k/k! + \mathcal{O}(n^{k-1})$ for $n \gg 1$.
 It follows that there is a constant $N = N(k,m)$ such that if $n \geq N$, then $\sigma_j(A) > 0$ for all $j \in \{ 1, 2, \dotsc, k \}$.
 
 Now let $(M_1^m,\ghyp)$ and $(M_2^{n-m},\grd)$ be compact spaceforms with constant sectional curvature $-1$ and $1$, respectively, where $n \geq N$.
 Then their Riemannian product $(M_1^m \times M_2^{n-m}, \ghyp \oplus \grd)$ is locally conformally flat~\cite{MR2371700}*{Example~1.167(3)}.
 Moreover, the Schouten tensor of the product is
 \begin{equation}
  \label{eqn:product-schouten}
  P = \frac{1}{2}\left( -\ghyp \oplus \grd \right) .
 \end{equation}
 Thus $(M_1^m \times M_2^{n-m}, \ghyp \oplus \grd)$ is in the geometric Aubin set of Proposition~\ref{vk-aubin-set}.
 The conclusions now follow from Theorem~\ref{main-thm} and~\ref{infinite-main-thm} and the Selberg--Malcev Lemma.
\end{proof}

\begin{remark}
 \label{rk:sigmak-N-values}
 In the notation of the proof of Theorem~\ref{vk-corollary}, it is easily checked that $N(1,m) = 2m+1$.
 Direct computation using Equation~\eqref{eqn:sigmak-formula} implies that $N(n,1) = n-2k+1$.
 The dimensions $m$ and $n$ for which $v_2$ or $v_3$ vanish on $(H^m \times S^{n-m})$ are known~\cite{MR3959563}*{Lemma~6.1}.
 Thus estimates for $N(2,m)$ and $N(3,m)$ are known.
 Indeed, Equation~\eqref{eqn:sigmak-formula} implies that
 \begin{align*}
  2v_1 & = n-2m , \\
  4v_2 & = \frac{n^2-(4m+1)n + 4m^2}{2} , \\
  8v_3 & = \frac{(n-2m)(n^2-(4m+3)n+4m^2+2)}{6} .
 \end{align*}
 In particular, one has
 \begin{align*}
  N(2,m) & > \frac{4m+1 + \sqrt{8m+1}}{2} , \\
  N(3,m) & > \frac{4m+3 + \sqrt{24m + 1}}{2}.
 \end{align*}
 When $k \geq 4$, there are only finitely many choices of $m$ and $n$ for which $v_k$ vanishes~\cite{MR4166038}, so sharp estimates for $N(k,m)$, $k \geq 4$, are more difficult to find.
\end{remark}

\appendix
\section{The Aubin Lemma for the Q-curvature}
\label{app:aubin}

The Aubin Lemma~\cite{MR0433500}*{Theorem 6} asserts that if $\pi \colon (\cM^n,\cg) \to (M^n,g)$ is a finite connected Riemannian covering and $Y_{Q_2}(\cM^n,[\cg])>0$, then $Y_{Q_2}(M^n,[g]) < Y_{Q_2}(\cM^n,[\cg])$.
Aubin's proof assumes a regular covering, but this assumption can be removed~\cite{MR2301449}*{Lemma~3.6}.
Both proofs require only the existence of a minimizer for $Y_{Q_2}(\cM^n,[\cg])$ and Jensen's inequality.
As such, they can be extended to the higher-order $Q$-curvatures:
	
\begin{lemma}
	\label{aubin-lemma}
	Let $k \in \bN$.
    Let $(M^n,g)$, $n > 2k$, be a compact Riemannian manifold.
	Suppose that $\pi \colon \cM^n \to M^n$ is a finite connected covering of degree at least two such that $Y_{Q_{2k}}(\cM^n,[\pi^\ast g]) > 0$ and there is a positive minimizer for $Y_{Q_{2k}}(\cM^n,[\pi^\ast g])$.
	Then
	\begin{equation*}
		Y_{Q_{2k}}(M^n,g) < Y_{Q_{2k}}(\cM^n,[\pi^\ast g]) .
	\end{equation*}
\end{lemma}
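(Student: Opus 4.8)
The plan is to adapt Aubin's covering argument: from the positive minimizer on $\cX^n$ I would manufacture a competitor on $X^n$ by averaging over the fibers of $\pi$ in the critical Lebesgue norm, and then compare Yamabe quotients. Let $\cu>0$ be the given minimizer, normalized so that $\int_{\cX}\cu^{\frac{2n}{n-2k}}\dvol_{\pi^\ast g}=1$; then $\cu$ solves $P_{2k}^{\pi^\ast g}\cu=\tfrac{n-2k}{2}\,Y\,\cu^{\frac{n+2k}{n-2k}}$ with $Y:=Y_{Q_{2k}}(\cX^n,[\pi^\ast g])$, whence $\int_{\cX}\cu\,P_{2k}^{\pi^\ast g}\cu\,\dvol_{\pi^\ast g}=\tfrac{n-2k}{2}Y$. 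Since $\pi$ has degree $d\ge 2$ and is a local isometry, each fiber $\pi^{-1}(x)=\{y_1,\dots,y_d\}$ is covered by local sheets isometric to a neighborhood of $x$, and I define a smooth positive test function $v$ on $X$ by
\[
 v(x)^{\frac{2n}{n-2k}} := \sum_{y\in\pi^{-1}(x)}\cu(y)^{\frac{2n}{n-2k}} .
\]

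First I would isolate the two structural facts that make $v$ sharp. Because $\pi$ is a local isometry, the fiber pushforward $\pi_\ast$ satisfies $\int_X \pi_\ast f\,\dvol_g=\int_{\cX}f\,\dvol_{\pi^\ast g}$ and intertwines the (natural) operator $P_{2k}$, i.e.\ $\pi_\ast\circ P_{2k}^{\pi^\ast g}=P_{2k}^g\circ\pi_\ast$. The first identity, applied to $f=\cu^{\frac{2n}{n-2k}}$, gives
\[
 \int_X v^{\frac{2n}{n-2k}}\dvol_g=\int_{\cX}\cu^{\frac{2n}{n-2k}}\dvol_{\pi^\ast g}=1 ,
\]
so the Yamabe quotient of $v$ is exactly $\tfrac{2}{n-2k}\int_X v\,P_{2k}^g v\,\dvol_g$. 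Hence the lemma reduces to the strict energy inequality
\[
 \int_X v\,P_{2k}^g v\,\dvol_g<\int_{\cX}\cu\,P_{2k}^{\pi^\ast g}\cu\,\dvol_{\pi^\ast g} ,
\]
since this yields $Y_{Q_{2k}}(X^n,[g])\le\tfrac{2}{n-2k}\int_X v\,P_{2k}^g v<\tfrac{2}{n-2k}\int_{\cX}\cu\,P_{2k}^{\pi^\ast g}\cu=Y$.

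The energy inequality is the heart of the matter and the step I expect to be the main obstacle. When $k=1$, where $P_2^g=-\Delta_g+\tfrac{n-2}{4(n-1)}R_g$, it follows from two pointwise bounds and a good choice of background. The defining relation gives $v\ge \cu\circ y_i$ for each sheet, and Cauchy--Schwarz then yields $|\nabla v|^2\le\sum_i|\nabla(\cu\circ y_i)|^2$, whose integral over $X$ is $\int_{\cX}|\nabla\cu|^2$; meanwhile the strict concavity of $t\mapsto t^{\frac{n-2}{n}}$ (Jensen) gives $v^2<\sum_i(\cu\circ y_i)^2$ pointwise, because $d\ge 2$ and $\cu>0$. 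As the quantities compared are conformally natural, I may compute in a representative with $R_g>0$---available because the elementary lower bound $Y_{Q_{2k}}(X^n,[g])\ge d^{-2k/n}Y>0$ (obtained from pullback test functions, as in the proof of Proposition~\ref{case-malchiodi}) places $X$ in the positive Yamabe class---so the curvature term yields a strict deficit and the inequality follows. For general $k$ the Dirichlet form is of order $2k$ and carries lower-order curvature terms of indefinite sign, so this term-by-term comparison breaks down; controlling it is the crux. Here I would exploit that $\pi_\ast$ intertwines $P_{2k}$, which makes $A:=\tfrac1d\pi^\ast\pi_\ast$ an $L^2$-orthogonal projection onto pulled-back functions commuting with $P_{2k}$, so that $\int f\,P_{2k}f=\int(Af)P_{2k}(Af)+\int(f-Af)P_{2k}(f-Af)$, and combine this with Jensen for the critical normalization and the strictness furnished by $d\ge2$ and $\cu>0$.

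Finally I would assemble the volume identity, the energy inequality, and the elementary lower bound into $Y_{Q_{2k}}(X^n,[g])<Y_{Q_{2k}}(\cX^n,[\pi^\ast g])$. The hypothesis that the minimizer is \emph{positive} enters precisely in the strict Jensen step, where two distinct sheets with $\cu>0$ force a genuine loss, and the hypothesis $Y>0$ enters both in normalizing the background and in orienting the quotient comparison. The remaining difficulty---and the reason the higher-order case is more than a formality---is the sign control of the lower-order terms of $P_{2k}$ under fiberwise averaging, which I would tame through conformal covariance and the intertwining of $\pi_\ast$ with $P_{2k}$.
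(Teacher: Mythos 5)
Your reduction to the strict energy inequality is fine for $k=1$, where the classical pointwise comparison $|\nabla v|^2\le\sum_i|\nabla(\cu\circ y_i)|^2$ plus strict Jensen on the zeroth-order term carries the day. But for $k\ge 2$ --- which is the actual content of the lemma --- your plan has a genuine gap, and your proposed repair does not close it. The projection $A=\tfrac1d\pi^\ast\pi_\ast$ commutes with $P_{2k}$ and gives $\int \cu P_{2k}\cu\ge\int(A\cu)P_{2k}(A\cu)$ (using that $Y>0$ makes the quadratic form positive), but this naturally produces the \emph{plain} fiber average as the competitor, not the power mean $v$ you defined, so the exact normalization $\int_X v^{\frac{2n}{n-2k}}=1$ is lost; and for the plain average, Jensen goes the wrong way in the denominator, since $\bigl(\tfrac1d\sum_i u_i\bigr)^{\frac{2n}{n-2k}}\le\tfrac1d\sum_i u_i^{\frac{2n}{n-2k}}$ makes the critical norm of $A\cu$ \emph{smaller} than $1$, which worsens rather than improves the quotient. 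You never obtain a strict inequality from this decomposition, and no amount of ``conformal covariance'' controls the indefinite lower-order terms of $P_{2k}$ under the nonlinear fiberwise power mean.

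The missing idea --- and the reason the hypothesis of an actual \emph{minimizer} (not merely positivity of the infimum) is essential --- is to substitute the Euler--Lagrange equation before estimating. The paper takes the plain fiber sum $\cu=\sum_j u\circ\widetilde\gamma_j$ (which descends to $X$) and uses that each sheet map is an isometry to compute the energy \emph{exactly}:
\begin{equation*}
\int_{\cX}\cu\,P_{2k}^{\cg}\cu\,\dvol_{\cg}
 \;=\;\tfrac{n-2k}{2}\,Y\int_{\cX}\cu\Bigl(\sum_j\cu_j^{\frac{n+2k}{n-2k}}\Bigr)\dvol_{\cg},
\end{equation*}
so the entire $2k$-th order Dirichlet form collapses to a zeroth-order integral and there are no lower-order terms left to control. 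The strict gain then comes pointwise: H\"older on the fiber sum gives $\cu\sum_j\cu_j^{\frac{n+2k}{n-2k}}\le\cu\bigl(\sum_j\cu_j^{\frac{2n}{n-2k}}\bigr)^{\frac{2k}{n}}\bigl(\sum_j\cu_j^{\frac{n}{n-2k}}\bigr)^{\frac{n-2k}{n}}$, and strict superadditivity of $t\mapsto t^{\frac{n}{n-2k}}$ (here $d\ge2$ and $u>0$ enter) gives $\bigl(\sum_j\cu_j^{\frac{n}{n-2k}}\bigr)^{\frac{n-2k}{n}}<\cu$; one final integral H\"older and the identity $\int_{\cX}\sum_j\cu_j^{\frac{2n}{n-2k}}\dvol_{\cg}=d$ yield $\mI_{2k}^{u_0^{4/(n-2k)}g}<Y$. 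You should restructure your argument around this substitution rather than around an energy comparison for the power-mean test function.
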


\begin{proof}
    Denote $\cg := \pi^\ast g$.
    Let $u \in C^\infty(\cM;\bR_+)$ be such that
    \begin{align*}
        P_{2k}^{\cg}u & = \frac{n-2k}{2}Y_{Q_{2k}}(\cM^n,[\cg]) u^{\frac{n+2k}{n-2k}} , \\
        \int_{\cM} u^{\frac{2n}{n-2k}}\dvol_{\cg} & = 1 ,
    \end{align*}
    where $P_{2k}$ is the GJMS operator~\cite{MR1190438} of order $2k$.

    Denote by $d\geq2$ the degree of $\pi$.
    Given $x \in \cM$, set $\{ x_1, \dotsc, x_d \} = \pi^{-1}\bigl(\pi(x)\bigr)$.
    Given a constant $p > 0$, define
    \begin{equation*}
        \cu_{(p)}(x) := \sum_{j=1}^d u(x_j)^p .
    \end{equation*}
    This defines a function $\cu_{(p)} \in C^\infty(\cM;\bR_+)$.
    Set $\cu:=\cu_{(1)}$.
    Observe that if $V \subset M$ is an evenly covered open set, then
    \begin{align*}
        \cu_{(p)} = \sum_{j=1}^d (u \circ \gamma_j)^p ,
    \end{align*}
    where $\gamma_j \colon V_1 \to V_j$ are isometries between the connected components $V_1, \dotsc, V_d$ of $\pi^{-1}(V)$.
    We may piece these isometries together to produce $m$ distinct isometries
    \begin{equation*}
        \widetilde{\gamma}_j \colon \cM \setminus \mS \to \cM \setminus \mS, \qquad j \in \{ 1, \dotsc, d \} ,
    \end{equation*}
    where $\mS \subset \cM$ is a piecewise smooth compact $(n-1)$-dimensional submanifold of $\cM^n$ such that $\pi^{-1}\bigl(\pi(\mS)\bigr) = \mS$.
    Therefore
    \begin{align*}
        \cu_{(p)}  = \sum_{j=1}^d \cu_j^p ,
    \end{align*}
    where $\cu_j := u \circ \widetilde{\gamma}_j$.
    Since each $\widetilde{\gamma}_j$ is an isometry, we deduce that
    \begin{align*}
        P_{2k}^{\cg}\cu & = \frac{n-2k}{2}Y_{Q_{2k}}(\cM^n,[\cg])\sum_{j=1}^d \cu_j^{\frac{n+2k}{n-2k}} , \\
        \int_{\cM} \cu_{\bigl(\frac{2n}{n-2k}\bigr)} \dvol_{\cg} & = d .
    \end{align*}

    Now, since $\cu(x) = \cu(x^\prime)$ for any $x,x^\prime \in \cM$ such that $\pi(x)=\pi(x^\prime)$, there is a $u_0 \in C^\infty(M;\bR_+)$ such that $\cu = u_0 \circ \pi$.
    Direct computation gives
    \begin{equation}
        \label{eqn:test-function}
        \begin{split}
            \mI_{2k}^{u_0^{\frac{4}{n-2k}}g} & = d^{-\frac{2k}{n}}\mI_{2k}^{\cu^{\frac{4}{n-2k}}\cg} \\
        & = d^{-\frac{2k}{n}}\left( \int_{\cM} \cu P_{2k}^{\cg}(\cu)\dvol_{\cg} \right)\left( \int_{\cM} \cu^{\frac{2n}{n-2k}}\dvol_{\cg}\right)^{-\frac{n-2k}{n}} .
        \end{split}
    \end{equation}
    Since $d \geq 2$ and $u>0$, applying H\"older's inequality and then convexity yields
    \begin{align*}
        \cu \left( \sum_{j=1}^d \cu_j^{\frac{n+2k}{n-2k}} \right) & \leq \cu \left( \sum_{j=1}^d \cu_j^{\frac{2n}{n-2k}} \right)^{\frac{2k}{n}} \left( \sum_{j=1}^d \cu_j^{\frac{n}{n-2k}} \right)^{\frac{n-2k}{n}} < \cu^2\left( \sum_{j=1}^d \cu_j^{\frac{2n}{n-2k}} \right)^{\frac{2k}{n}} .
    \end{align*}
    Since $Y_{Q_{2k}}(\cM^n,[\cg])>0$, we deduce that
    \begin{align*}
        \int_{\cM} \cu P_{2k}^{\cg}(\cu) \dvol_{\cg} & = Y_{Q_{2k}}(\cM^n,[\cg])\int_{\cM} \cu \left( \sum_{j=1}^d \cu_j^{\frac{n+2k}{n-2k}} \right) \dvol_{\cg} \\
        & < Y_{Q_{2k}}(\cM^n,[\cg])\int_{\cM} \cu^2 \cu_{\bigl(\frac{2n}{n-2k}\bigr)}^{\frac{2k}{n}} \dvol_{\cg} \\
        & \leq Y_{Q_{2k}}(\cM^n,[\cg])\left( \int_{\cM} \cu^{\frac{2n}{n-2k}}\dvol_{\cg} \right)^{\frac{n-2k}{n}} \left( \int_{\cM} \cu_{\bigl(\frac{2n}{n-2k}\bigr)} \dvol_{\cg} \right)^{\frac{2k}{n}} \\
        & = d^{\frac{2k}{n}}Y_{Q_{2k}}(\cM^n,[\cg])\left( \int_{\cM} \cu^{\frac{2n}{n-2k}}\dvol_{\cg}\right)^{\frac{n-2k}{n}} .
    \end{align*}
    Combining this with Equation~\eqref{eqn:test-function} yields
    \begin{equation*}
        Y_{Q_{2k}}(M^n,[g]) \leq \mI_{2k}^{u_0^{\frac{4}{n-2k}}g} < Y_{Q_{2k}}(\cM^n,[\cg]) . \qedhere
    \end{equation*}

\section*{Declarations}

\subsection*{Funding}
This work was partially supported by Fundação de Amparo \`a Pesquisa do Estado de São Paulo (FAPESP), Conselho Nacional de Desenvolvimento Científico e Tecnológico (CNPq), Natural Sciences and Engineering Research Council of Canada (NSERC), Natural Sciences Foundation (NSF), Hong Kong Special Administrative Region General Research Fund (HKSAR CRF), and Simons Foundation. 
J.H.A. was supported by FAPESP \#2020/07566-3, \#2021/15139-0, and \#2023/15567-8 and CNPq \#409764/2023-0, \#443594/2023-6, \#441922/2023-6, and \#306014/2025-4. 
J.S.C. was partially supported by a Simons Foundation Collaboration Grant for Mathematicians and by the National Science Foundation under Award \#DMS-2505606.
P.P. was supported by FAPESP \#2022/16097-2, and CNPq \#313773/2021-1 and \#441922/2023-6. 
J.W. was supported by NSERC \#RGPIN-2018-03773 and  HKSAR-GRF General Research Grant \#14309824.

\subsection*{Conflict of interest}
The authors have no relevant financial or non-financial interests to disclose.

\subsection*{Data availability}
Data sharing not applicable as no datasets were generated or analyzed during the study.

\subsection*{Ethics approval}
Not applicable.
\end{proof}

\bibliography{references}
\bibliographystyle{abbrv}

\end{document}